\crefname{hypothesis}{Hypothesis}{Hypotheses}
\newcommand\titlelowercase[1]{\texorpdfstring{\lowercase{#1}}{#1}}
\title{Learning Hidden Dynamics using Intelligent Automatic Differentiation\footnote{K\titlelowercase{ailai} X\titlelowercase{u and} D\titlelowercase{ongzhuo} L\titlelowercase{i contributed equally to this paper.}}}
\author{Kailai~Xu\thanks{Institute for Computational and Mathematical Engineering, Stanford University, Stanford, CA, 94305 (\email{kailaix@stanford.edu})}
\and Dongzhuo~Li\thanks{Department of Geophysics, Stanford University, Stanford, CA, 94305 (\email{lidongzh@stanford.edu})}
\and Eric~Darve\thanks{Mechanical Engineering and Institute for Computational and Mathematical Engineering, Stanford University, Stanford, CA, 94305 (\email{darve@stanford.edu})}
\and Jerry~M.~Harris\thanks{Department of Geophysics and Institute for Computational and Mathematical Engineering, Stanford University, Stanford, CA, 94305 (\email{harrisgp@stanford.edu})}
}
\pgfplotsset{compat=newest}
\theoremstyle{plain}
\newcommand{\revise}[1]{{#1}}
\newcommand{\RR}[0]{\mathbb{R}}
\newcommand{\bx}{\mathbf{x}}
\newcommand{\by}{\mathbf{y}}
\newcommand{\bc}{\mathbf{c}}
\newcommand{\bu}{\mathbf{u}}
\newcommand{\bv}[0]{\mathbf{v}}
\newcommand{\bt}[0]{\bm{\theta}}
\begin{document}

\maketitle


\begin{abstract}
Many engineering problems involve learning hidden dynamics from indirect observations, where the physical processes are described by systems of partial differential equations (PDE). Gradient-based optimization methods are considered scalable and efficient to learn hidden dynamics. However, one of the most time-consuming and error-prone tasks is to derive and implement the gradients, especially in systems of PDEs where gradients from different systems must be correctly integrated together. To that purpose, we present a novel technique, called intelligent automatic differentiation (IAD), to leverage the modern machine learning tool \texttt{TensorFlow} for computing gradients automatically and conducting optimization efficiently. Moreover, IAD allows us to integrate specially designed state adjoint method codes to achieve better performance. Numerical tests demonstrate the feasibility of IAD for learning hidden dynamics in complicated systems of PDEs; additionally, by incorporating custom built state adjoint method codes in IAD, we significantly accelerate the forward and inverse simulation.
\end{abstract}
\begin{keyword}
	Automatic Differentiation, Adjoint State Method
\end{keyword}

\section{Introduction}
Learning hidden dynamics from indirect observations --- observations that depend on but are not the states of the dynamics --- is fundamental in many applications. For example, in medical imaging, the reaction-advection-diffusion equation is used for modeling the hidden dynamics of tumor concentration. The coefficients in the reaction-advection-diffusion equation are not known and are 
estimated from MR (magnetic resonance) images (indirect observations)~\cite{hogea2008brain}. The estimated coefficients can be used to predict future tumor growth for planning treatment. As another example, in CO${}_2$ sequestration projects, the migration of injected CO${}_2$ into geological formations is governed by unknown fluid dynamics, where reservoir engineers can only collect sparsely distributed data~(direct observations) from wells, which are insufficient for learning the hidden fluid dynamics. However, we can easily collect waveform data~(indirect observation) whose evolution is determined by the fluid dynamics. It is desirable to use the waveform data to conduct high fidelity inverse modeling to learn the hidden fluid dynamics. With the learned dynamics, reservoir engineers will be able to answer whether there are or will be any leaks~\cite{leung2014overview,gcep2008,gcep_2009}.

\begin{figure}[hbtp]
    \centering
  \includegraphics[width=0.8\textwidth]{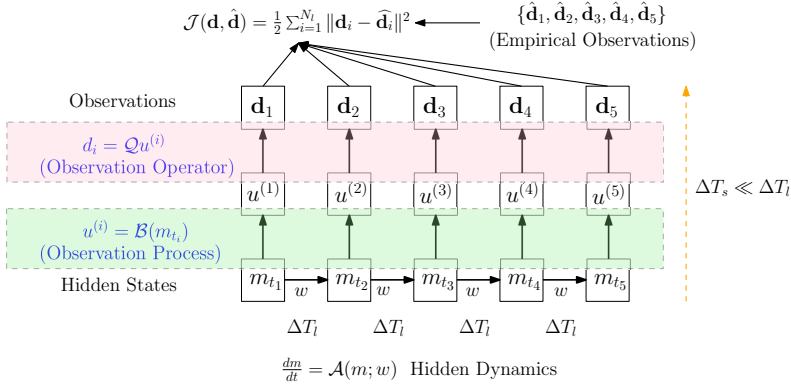}
  \caption{The hidden states such as concentration, pressure and saturation are not directly observable. $w$ are the unknown parameters to be determined through the PDE constrained optimization problem. The hidden states are coupled with another physical process called observation process. We collect data $\mathbf{d}_i = \mathcal{Q}u^{(i)}$ from receivers.}
  \label{fig:obs}
\end{figure}

In the general form, learning hidden dynamics from indirect observations can be formulated as a mathematical optimization problem involving systems of partial differential equations (PDE) 
\begin{equation}\label{equ:J}
  \begin{aligned}
	& \min_{w} L(w) = \mathcal{J}(\mathbf{d}, \hat{\mathbf{d}}) 
	= \frac{1}{2} \sum_{i=1}^{N_l} \|\mathbf{d}_i - \widehat{\mathbf{d}}_i\|^2 \\
	\mathrm{s.t.} \quad & \mathbf{d}_i(\bx, t) = \mathcal{Q}(u^{(i)}(\bx, t)) \\
    & u^{(i)}(\bx, t) = \mathcal{B}(m(\bx, t)) & \bx \in \Omega, \quad t_i \leq t \leq t_i + \Delta T_s \\	
    & \frac{d m(\bx, t)}{d t} = \mathcal{A}(m(\bx, t);w),\quad \mathcal{A}_{\mathrm{bc}}(m(\bx,t))=0 & 0 \leq t \leq T_l
\end{aligned}
\end{equation}
Here $\Omega$ is the computational domain and we identify three components (see \Cref{fig:obs} for a schematic description)
\begin{enumerate}
    \item \textbf{Hidden dynamics}. The hidden states $m(\bx, t)$, such as the fluid properties or the tumor concentration in the previous examples, are subject to some governing equations $\frac{dm}{dt}=\mathcal{A}(m;w)$ in the time horizon $[0, T_l]$. $\mathcal{A}$ is a differential operator (e.g., the reaction-advection-diffusion equation) and $w$ are unknown physical parameters. $\mathcal{A}_{\mathrm{bc}}(m(\bx,t))=0$ is the boundary condition.
    \item \textbf{Observation process}. Another physical process $\mathcal{B}$, such as wave propagation (or magnetic resonance imaging), is coupled with $m$ and produces seismic signals~(or images) $u = \mathcal{B}(m)$. Its time duration $[t_i, t_i+\Delta T_s]$ is much shorter than that of the hidden dynamics, $T_l$, and therefore we assume that $m$ is fixed during $t\in [t_i, t_i+\Delta T_s]$: $m(\bx, t) \approx m(\bx, t_i) := m_{t_i}(\bx)$.
    \item\textbf{ Observation operator}. In practice, only parts of the seismic signals~(or images) $u^{(i)}$ are observable because of the receiver constraints. This constraint is expressed by $\mathbf{d}_i = \mathcal{Q}(u^{(i)})$, where $\mathbf{d}_i$ is the data collected by the receivers. 
\end{enumerate}

When learning unknown parameters $w$ in hidden dynamics, gradient-based optimization methods for minimizing $L(w)$ have been proven to be effective and scalable (\cite{tromp2005seismic,oberai2003solution,martins2002coupled,bonnet2005inverse}). To obtain the gradients for the mathematical optimization, there are usually two prevailing methods: (1) state adjoint methods, where the gradients are derived and implemented by hand, which is a time-consuming and error-prone process; (2) automatic differentiation, where gradients are derived by storing and analyzing all the traces of forward simulation, which limits our ability to leverage memory and computation efficient algorithms for bottleneck parts. Combing the best of the two methods --- getting rid of the necessity of deriving gradients by hand while retaining the flexibility of incorporating custom built adjoint state codes --- is essential for building convoluted models to learn hidden dynamics. 

We propose a new framework, called intelligent automatic differentiation (IAD), for implementing algorithms for learning hidden dynamics described by PDEs from indirect data in general. Particularly, we build our framework based on the modern machine learning tool \texttt{TensorFlow}, which offers automatic differentiation \cite{gockenbach2002automatic,gremse2016gpu,giles2005using,courty2003reverse} functionality,  scalability and parallelism for our scientific computing problems. However, direct implementation of numerical schemes in \texttt{TensorFlow} is inefficient in three situations: (1) we need to write vectorized codes for efficient linear algebra computation, which can be difficult; (2) automatic differentiation consumes much memory; (3) there are more efficient implementation of gradient computation for certain parts of computation. In IAD, we deal with those challenges by incorporating custom operators.

In this paragraph, we describe how automatic differentiation works and how custom operators are implemented in IAD. When we implement the forward simulation, each operation is represented by a node in a computational graph. For example, the operation can be one time step of forward simulation for the hidden dynamics or the observation operator $\mathcal{Q}$. Those nodes are connected by directed edges that represent data and the arrow indicates the flow of the data~(\Cref{fig:cgraph}). To compute the gradient of $\frac{\partial L}{\partial w}$ in \Cref{fig:cgraph}, each operation in the path $w\leadsto L$ is equipped with two functions
\begin{equation}\label{equ:fb}
  \begin{aligned}
    y &= \texttt{forward}(x) \\
    \frac{\partial L}{\partial x}&= \texttt{backward}\left( \frac{\partial L}{\partial y} \right)
\end{aligned}
\end{equation}
The result in the second equation is equal to $\frac{\partial L}{\partial y}\frac{\partial y}{\partial x}$ according to the chain rule. \revise{\texttt{backward} can be interpreted as ``back-propagating''} the gradients from $\frac{\partial L}{\partial y}$ to $\frac{\partial L}{\partial x}$. By chaining together these operators we are able to obtain the gradients $\frac{\partial L}{\partial w}$ no matter how complicated the computational graph is. The decoupling of individual operations and systematic computation has other advantages. If we are trying out a new sub-model, e.g., replacing the advection-diffusion equation by the fractional diffusion equation for the hidden dynamics $\mathcal{A}$, we only need to substitute the corresponding operator and the rest of the graph remains the same. To implement custom operators, we manually implement both \texttt{forward} and \texttt{backward} in \cref{equ:fb} and insert this new operator into the original computational graph. 

\begin{figure}[htpb]
\centering
  \includegraphics[width=0.8\textwidth]{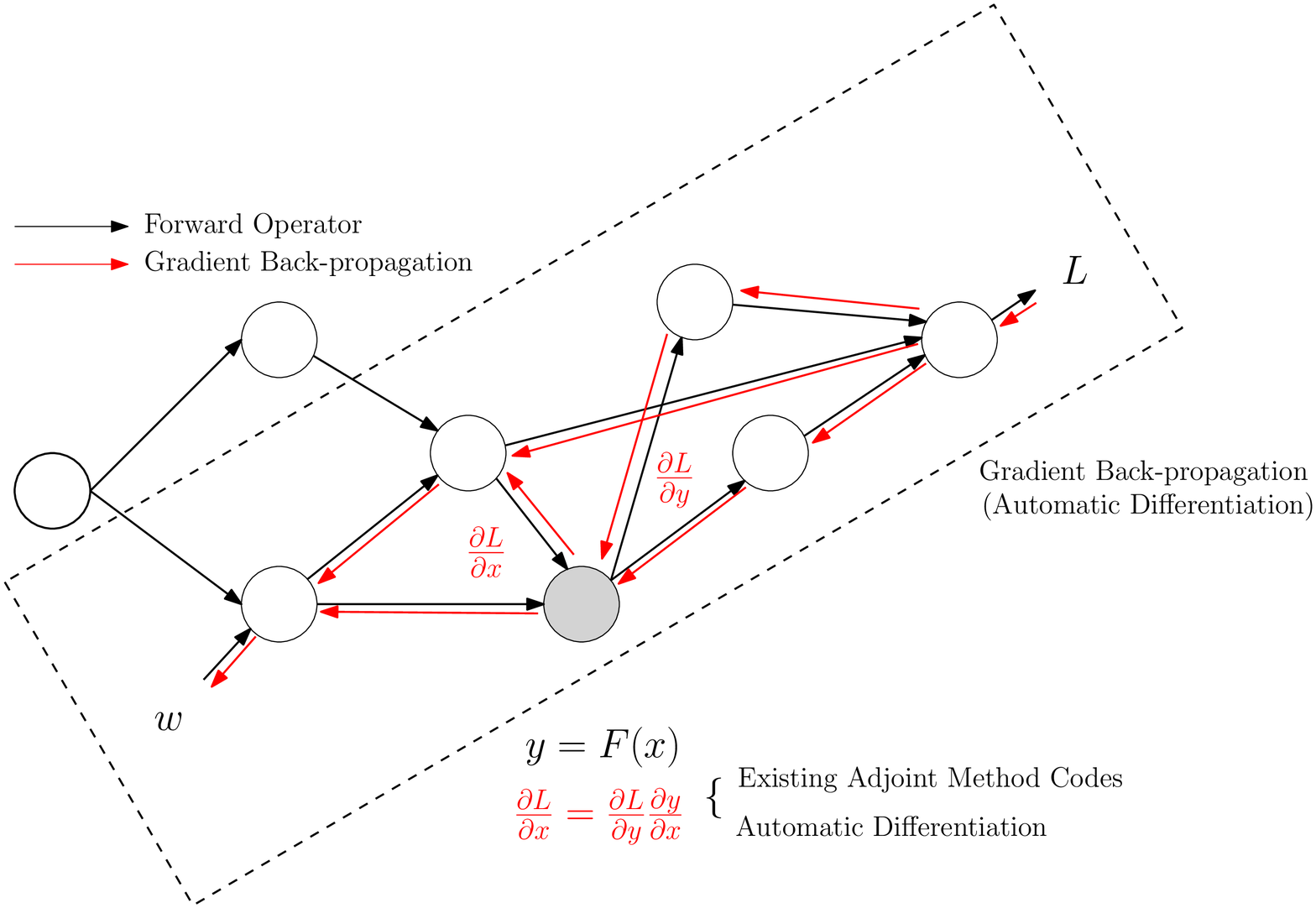}
  \caption{The overall simulation consists of multiple operators. For operators in the path $w\leadsto L$, we require that they can back-propagate the gradients $\frac{\partial L}{\partial y}$ to $\frac{\partial L}{\partial x}$. Automatic differentiation or custom built adjoint method codes can be used for this purpose. When all the operators are linked together, the system can output the gradient $\frac{\partial L}{\partial w}$ by passing the gradients through each individual operator.}
  \label{fig:cgraph}
\end{figure}
%
%

\begin{figure}[htpb]
\centering
  \includegraphics[width=0.8\textwidth]{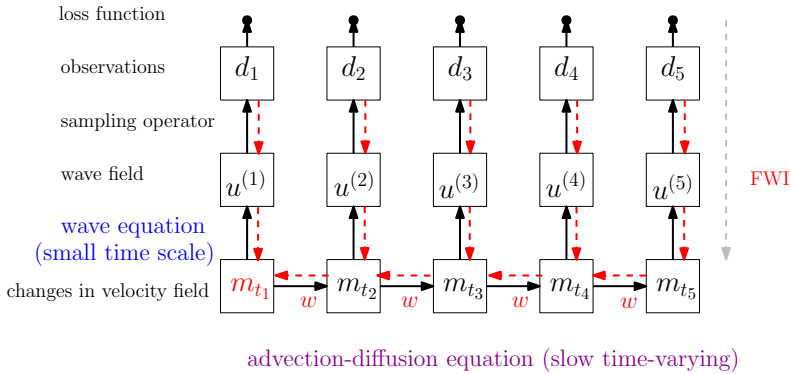}
  \caption{A paradigm for inverse problems with dynamical systems exhibiting multiple time-scales. The black arrows denote the data flow of forward simulation while the red arrows are for gradient back-propagation.}
  \label{fig:backward}
\end{figure}

Although this framework is applicable to a broader class of problems for learning hidden dynamics,  we focus on a  time-lapse monitoring problem in seismic imaging for concreteness. The problem can be described as follows (see \Cref{fig:backward} for a schematic description)
    \begin{multline*}
    \underbrace{\mbox{Medium Property}\xrightarrow{\mbox{Rock Physics Model}} \mbox{Acoustic Wave Velocity}}_{\mbox{This defines the hidden dynamics}\  \mathcal{A}} \\\xrightarrow{\mathcal{B}} \mbox{Wave Field} \xrightarrow{\mathcal{Q}} \mbox{Waveform Data}
\end{multline*}

In this problem, the hidden dynamics $\mathcal{A}$ describes the evolution of the acoustic wave velocity $v = m + m_{\mathrm{base}}$. The medium property~(e.g., concentration, saturation or pressure) determines the acoustic wave velocity, denoted by $v=m + m_{\mathrm{base}}$, where $m_{\mathrm{base}}$ is a reference velocity. The evolution of $v$ is determined by the flow dynamics and the rock physics model. For example, we may use an advection-diffusion equation to describe the evolution of concentration. The changes in the concentration of fluids alter the rock acoustic wave velocity because the fluids takes up pore space inside. This relationship is known as the rock physics model. We can obtain these models from experiments or first principles~\cite{mavko2009rock}. In practice, the interaction of medium property dynamics and the rock physics model can be quite complicated and the direct modeling of the acoustic wave velocity simplifies the procedure in the presence of sufficient data. As a prototype, we model the overall dynamics of $m$ with (fractional) advection-diffusion equations. In general, the model can be more sophisticated and tailored to specific applications, such as coupling a two-phase flow model and Gassmann's equations for rock physics. The observation process $\mathcal{B}$ describes the propagating wave $u^{(i)}$ in the rocks and the observation operator $\mathcal{Q}$ is the signal $u^{(i)}$ received at several locations. The acoustic wave velocity $m_{t_i}$ is not directly measurable, but we can measure the wave-field $u^{(i)}$ at the receiver location. The governing equation of this process is usually a wave equation, whose bulk modulus is a function of the acoustic wave velocity. 

We will also investigate the stability of gradient computation for different hidden dynamics, including the fractional advection-diffusion equation. We show that if we use an implicit scheme for the forward simulation, under mild assumptions, our method enjoys certain stability. The analysis is also true for the time-fractional partial differential equation.

The rest of the paper is organized as follows. We introduce the background and the mathematical setting of the model problem in \Cref{sect:ps}. \Cref{sect:haad} describes the proposed method and implementation details. In \Cref{sect:stab}, the stability is analyzed. The model problem is solved numerically and discussed in \Cref{sect:ap}. Finally, we summarize and point out limitations and future research opportunities in \Cref{sect:conc}.

\section{Problem Setup}\label{sect:ps}

\subsection{Background}

In time-lapse monitoring problems, certain governing equations control the evolution of medium properties. The parameters in the governing equations cannot be observed directly. Instead, elastic or acoustic waves can be excited to sense those properties. Repeated surveys of such kind using wave phenomena or the so-called full-waveform inversion (FWI) can help to reconstruct the hidden parameters in the governing equations, and may provide insights for future predictions. FWI is currently the state-of-the-art method in seismic inversion~\cite{tarantola1984inversion,virieux2009overview,pratt1999seismic,biondi2012tomographic}. It computes the gradient of the misfit between estimated and observed waveform data with respect to model parameters. 

The slowly evolving processes are governed by certain PDEs, i.e.,\ $\frac{d m}{d t}=\mathcal{A}(m;w)$ in \Cref{equ:J}. The task here is to use waveform information $\hat {\mathbf{d}}_i$ to infer the hidden parameters $w$ in those PDEs. Coupled dynamic systems in different time scales are involved in this inverse problem. It can be assumed that during the ``fast'' wave propagation time scale, the ``slow-time'' property changes are negligible.

\subsection{Hidden Dynamics $\frac{d m}{d t}=\mathcal{A}(m;w)$}\label{sect:model problem}

We consider three models for describing the slow time-varying process~(hidden dynamics) in \Cref{equ:J}. These models are representative for describing the dynamics of the fluid properties. 

\begin{itemize}	
\item The advection diffusion equation.

In this case, the configuration within which the diffusion takes place are also moving in a preferential direction. The governing equation can be mathematically described as 
\begin{equation}\label{equ:ad}
\frac{{\partial m}}{{\partial t}} = {b_1}\frac{{\partial m}}{{\partial x}} + {b_2}\frac{{\partial m}}{{\partial y}} + a\left( {\frac{{{\partial ^2}m}}{{\partial {x^2}}} + \frac{{{\partial ^2}m}}{{\partial {y^2}}}} \right)
\end{equation}
with the initial condition and Dirichlet boundary condition
\begin{equation}\label{equ:ib}
m(\bx, 0; w) = m_0(\bx;w) \qquad 	m(\bx, t; w) = 0 \qquad \bx\in \partial\Omega
\end{equation}

\item Time-fractional diffusion equation.

The time-fractional PDEs are very useful tools but computational challenging. They have been used for describing memory and hereditary properties of many materials. However, solving inverse problems in the time-fractional PDEs exhibits unique challenges because in the discrete adjoint method, solutions from all previous steps are coupled together~\cite{maryshev2013adjoint}. This not only adds complexity to algorithms and implementation but also places pressure on memory and computation. Nevertheless, the inverse modeling for the fractional PDEs is substantially simplified by using our method. The gradient computation in the multi-step scheme is properly handled by \texttt{ADCME} and requires no implementation efforts except for the forward simulation.

The time-fractional advection diffusion equation is 
\begin{equation}\label{equ:tad}
	{}_0^CD_t^\alpha m = a\left( {\frac{{{\partial ^2}m}}{{\partial {x^2}}} + \frac{{{\partial ^2}m}}{{\partial {y^2}}}} \right)
\end{equation}
where ${}_0^CD_t^\alpha$ is the Caputo derivative with index $\alpha\in (0,1)$
\begin{equation}
	{}_0^CD_t^\alpha f(t) = \frac{1}{\Gamma(1-\alpha)}\int_0^t \frac{f'(\tau)d\tau}{(t-\tau)^\alpha}
\end{equation}
The initial and boundary conditions are the same as \Cref{equ:ib}.

\item Space-fractional diffusion equation

The space-fractional diffusion equation is
\begin{equation}\label{equ:fl}
	\frac{{\partial m}}{{\partial t}} =  - a(-\Delta)^s m
\end{equation}
where initial and boundary conditions are the same as \Cref{equ:ib}, and $(-\Delta)^s$ is the fractional Laplacian, which describes long distance interactions~\cite{vazquez2017mathematical}. Let p.v.\ denote the principal value integration, the fractional Laplacian $(-\Delta)^s$, indexed by $s\in (0,1)$, 
\begin{equation}\label{equ:sad}
	(-\Delta)^s m(\bx) = c_{d,s} \mathrm{p.v.}\int_{\RR^d}\frac{m(\bx)-m(\by)}{|\bx-\by|^{d+2s}}d\by, \quad c_{d,s} = \frac{2^{2s}\Gamma\left( s+\frac{d}{2} \right)}{\pi^{d/2}\left|\Gamma(-s) \right|}
\end{equation}

Discretization of the fractional Laplacian results in a dense matrix, which significantly increase the memory and computation consumption. To alleviate this problem, we consider a square domain with homogeneous Dirichlet boundary condition and apply the Fourier spectral method. In this case, to avoid nonzero parts of $m$ penetrating the boundary, we assume the transport coefficients $b_1=b_2=0$. 
\end{itemize}

The hidden dynamics can be more sophisticated and domain-specific, such as the black-oil equations in reservoir simulation~\cite{chen2000formulations} or the PDE for tumor growth modeling~\cite{knopoff2013adjoint}. For numerical stability, we resort to implicit schemes for the numerical simulation, although they are usually more computational expensive. We show how we discretize the equation in \Cref{sect:d_ad}.

\subsection{Observation Process $m_{t_i} = \mathcal{B}(u^{(i)})$ and Operator $\mathcal{Q}$}
The second constraint $m_{t_i} = \mathcal{B}(u^{(i)})$ in \Cref{equ:J} is the acoustic wave equation in the stress-velocity formulation:
\begin{equation}
    \label{eq:acoustic_eqn}
	\begin{cases}
	    \partial_t p(\mathbf{x},t) = -K(\mathbf{x},t; {m}(\bx, t))
	    \; \nabla \cdot \mathbf{v}(\mathbf{x},t) + s(\mathbf{x},t), \\
	    \rho(\mathbf{x},t)\; \partial_t \mathbf{v}(\mathbf{x},t) = -\nabla p(\mathbf{x},t), \\
	    p(\mathbf{x}, t) = 0, \quad \bv(\mathbf{x}, t) = \mathbf{0} \quad \mathbf{x} \in \partial \Omega \\
	    p(\mathbf{x}, t) = 0, \quad \mathbf{v}(\mathbf{x}, t) = \mathbf{0}, \quad t\leq0, \quad \mathbf{x} \in \Omega,
    \end{cases}
\end{equation}
The computational domain is $\partial \Omega \times [0, \Delta T_s]$. Compared to the hidden dynamics, \Cref{eq:acoustic_eqn} occurs in a relatively short period, that is $\Delta T_s \ll \Delta T_l$. Therefore, we can view ${m}(\bx, t)$ as fixed and approximately equal to
\begin{equation}
	m(\bx, t) \approx m(\bx, t_i) := m_{t_i}(\bx)
\end{equation}
where $K(\mathbf{x},t; {m}(\bx, t))=\big({m}(\mathbf{x},t)+{m}_{\mathrm{base}}(\mathbf{x},t)\big)^2\rho(\mathbf{x},t)\approx \big(m_{t_i}+{m}_{\mathrm{base}}(\mathbf{x},t)\big)^2\rho(\mathbf{x},t)$  is the bulk modulus for a given baseline quantity ${m}_{\mathrm{base}}(\bx, t)$, and $\rho$ is the density, $\bv(\bx,t)=(u(\bx,t),v(\bx,t))$ is the particle velocity and $s(\mathbf{x},t)$ is the source function. In our case, we consider ${m}_{\mathrm{base}}(\bx, t)$ and $\rho(\mathbf{x},t)$ as constant, and they are independent of $\bx$ and $\bt$. The acoustic equation is reduced from the elastic wave equation, and $p$ is the pressure. The waveform is given by
\begin{equation}
	\mathbf{u}^{(i)}(\mathbf{x},t) = \begin{bmatrix}
		\mathbf{v}(\mathbf{x},t) & p(\mathbf{x},t)
	\end{bmatrix}
\end{equation}

The observation operator $\mathcal{Q}$ are projections of the pressure $p(\mathbf{x},t)$ and particle velocity $\mathbf{v}(\mathbf{x},t)$ onto the temporal-spatial space of receivers~(\Cref{fig:rec}), i.e.,
\begin{equation}
	\mathcal{Q}\mathbf{u}^{(i)} = p(\mathbf{x}_k,t)
\end{equation}
for all receiver locations $\{\mathbf{x}_k\}$. The model parameter, $m_{t_i}$, has very small changes during one seismic survey on the scale of hours or days, and evolves slowly on a time scale of months or even years.

\begin{figure}[htpb]
\centering
  \includegraphics[width=0.5\textwidth]{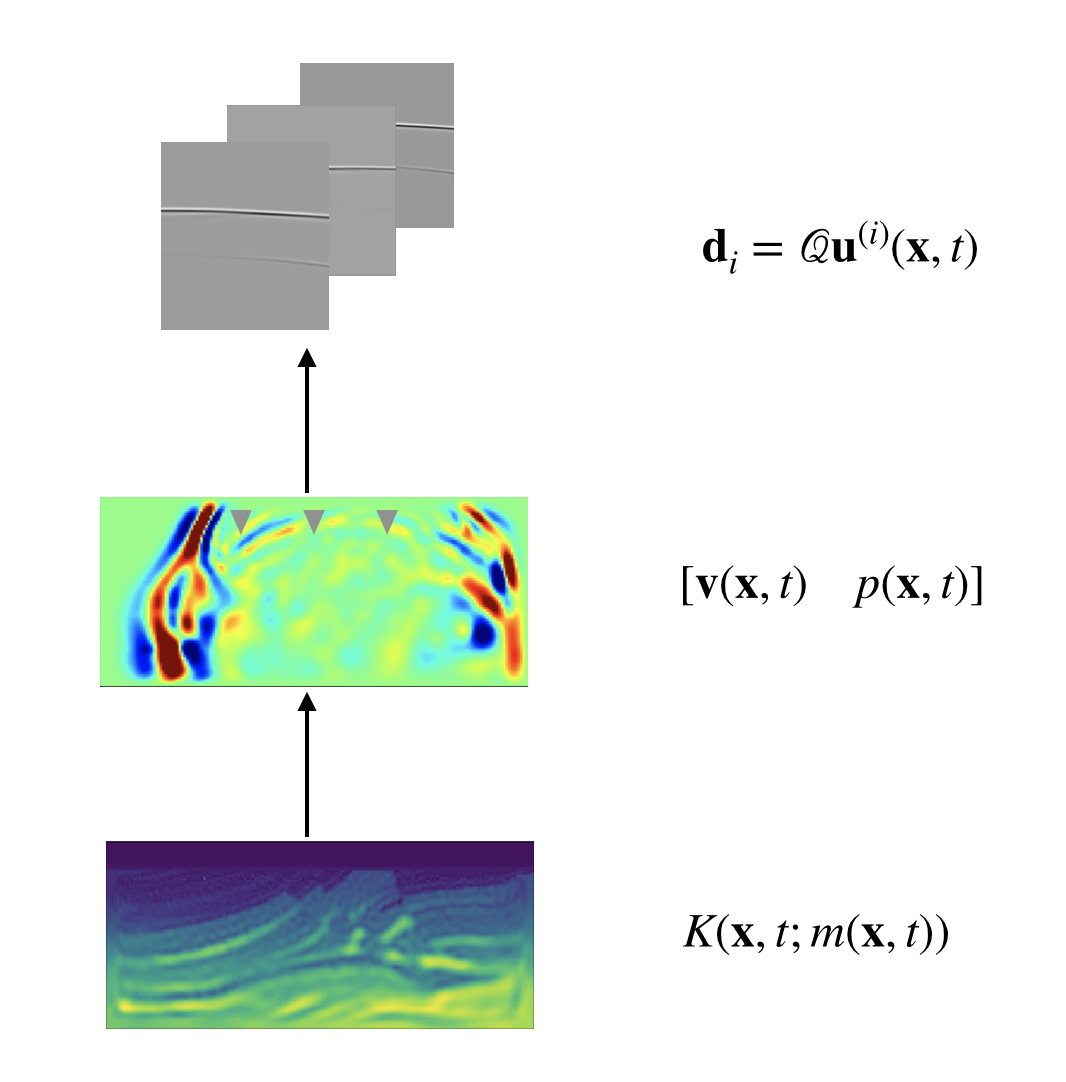}
  \caption{The observation operator $\mathcal{Q}$ are projections of pressure $p(\mathbf{x},t)$ onto the temporal-spatial space of receivers $p(\mathbf{x}_k, t)$.}
  \label{fig:rec}
\end{figure}

Note that the acoustic equation \Cref{eq:acoustic_eqn} has a free-surface boundary condition. Due to the relative short propagation time, waves can only reach the free-surface on part of the boundary. To guarantee that no waves come back from the rest part of the boundary, absorbing boundary conditions are imposed. In this research, we use the convolutional perfectly match layers~(CPML)~\cite{li2010convolutional}, whose implementation is discussed in detail in the discretization section. The CPML works by substituting the spatial derivative using 
\begin{equation}\label{eq:cpml_derivative}
   {\partial _{\tilde x}}c = \left( {\frac{1}{{{\kappa _x}}}{\partial _x} + {\xi _x} \star {\partial _x}} \right)c = \frac{{{\partial _x}c}}{{{\kappa _x}}} - {\eta _x}\int_0^t {{e^{ - {\alpha _x}\tau }}{\partial _x}c(x,t - \tau )d\tau } ,
\end{equation}
where ${\xi _x}(t) =  - ({\eta _x})H(t)\exp ({\alpha _x}t)$, ${\alpha _x} = ({d_x}/{\kappa _x} + {a^x})$, $\eta_x = d_x/\kappa_x^2$, $H(t)$ is a Heaviside function, and $\star$ is convolution in time. The damping profile $d_x(x)$, and linearly varying parameters $\kappa_x$ and $a_x$ can be found in \cite{martin2009unsplit}. Here where $c$ can be $u$, $v$ or $p$.

\subsection{Discretization of the (Fractional) Advection Diffusion Equation}\label{sect:d_ad}

For numerical stability, we use an implicit method to discretize the (fractional) advection diffusion equations in \Cref{sect:model problem}. The interval $[0,T_l]$ is split into $n_T$ equal length intervals $0=\tau_0 < \tau_1 < \ldots < \tau_{n_T} = T$ with $\tau_{i+1}-\tau_i = \Delta \tau$. We choose $\Delta \tau$ so that $t_i$ in \Cref{equ:J} coincides with some $\tau_j$. We consider uniform discretization of $\Omega=[0,L]^2$ with step size $h$, with grid points $\bx_{ij} = (x_i,x_j)$, $i,j=0,1,2,\ldots,N$. $m_{ij}^n$ is the numerical approximation to $m(\bx_{ij}, n\Delta \tau; w)$. 

\begin{itemize}
	\item The advection diffusion equation.
\begin{equation}\label{equ:dis_ad}
	\begin{aligned}
	 \frac{{m_{i,j}^{n + 1} - m_{i,j}^n}}{{\Delta \tau}} =&   {b_1}\frac{{m_{i + 1,j}^{n+1} - m_{i - 1,j}^{n+1}}}{{2h}} + {b_2}\frac{{m_{i,j + 1}^{n+1} - m_{i,j - 1}^{n+1}}}{{2h}} \\
	& + a\frac{{m_{i - 1,j}^{n+1} + m_{i + 1,j}^{n+1} + m_{i,j - 1}^{n+1} 
	+ m_{i,j + 1}^{n+1} - 4m_{i,j}^{n+1}}}{{{h^2}}}	
	\end{aligned}	
\end{equation}
The update scheme can be written as~(after taking into account of the boundary condition)
\begin{equation}\label{equ:scheme0}
	(I + \Delta \tau A) \bu^{n+1} = \bu^n
\end{equation}
where $\bu^n = \begin{bmatrix}
	m_{1,1}^{n + 1} & m_{2,1}^{n + 1} & \cdots & m_{N-1,1}^{n + 1} & m_{1,2}^{n + 1} & \cdots & m_{N-1,N-1}^{n + 1}
\end{bmatrix}$ and $-A$ is the discrete advection diffusion operator, independent of $\Delta \tau$. Since we have used an implicit scheme, the update scheme is unconditional stable~\cite{thomas2013numerical}, i.e., 
$$\rho((I+\Delta \tau A)^{-1})<1,\ \forall \Delta \tau>0$$

\item The time-fractional diffusion equation.

Let 
\begin{equation}
	G_m = (m+1)^{1-\alpha} - m^{1-\alpha}, \quad m\geq 0, \quad 0<\alpha<1
\end{equation}
then the fractional derivative can be discretized as~\cite{jiang2017fast}
\begin{equation*}
\begin{aligned}
& {}_0^CD_\tau^\alpha u(\tau_n) \\
 = &\frac{\Delta \tau^{-\alpha}}{\Gamma(2-\alpha)}\left[G_0 u_n - \sum_{k=1}^{n-1}(G_{n-k-1}-G_{n-k})u_k + G_n u_0 \right] + \mathcal{O}(\Delta \tau^{2-\alpha})	
\end{aligned}
\end{equation*}
therefore the update scheme is 
\begin{equation}\label{equ:tscheme}
	(I + \Gamma(2-\alpha)\Delta\tau^\alpha A)\bu^{n+1} = \Gamma(2-\alpha)\Delta\tau^\alpha \bu^n + \sum_{k=1}^{n-1}(G_{n-k-1}-G_{n-k})\bu^k - G_n \bu^0
\end{equation}

\item The space-fractional diffusion equation.

We can use fast Fourier transform to solve \Cref{equ:fl}. Let $\hat \bu^n$ be the discrete Fourier transform of $\bu^n$, then we have
\begin{equation}
	(1+\Delta \tau |\bm{\xi}|^{2s})\hat \bu^{n+1} = \hat \bu^n
\end{equation}
and therefore
\begin{equation}
	 \bu^{n+1} = \mathcal{F}^{-1}\left( \frac{\hat\bu^n}{1+\Delta \tau|\bm{\xi}|^{2s}} \right)
\end{equation}
where $\mathcal{F}^{-1}$ denotes the inverse discrete Fourier transform. The scheme is equivalent to 
\begin{equation}\label{equ:fscheme}
	(I + \Delta \tau A^s)\bu^{n+1} = \bu^n
\end{equation}
for an appropriate operator $A^s$ that depends on $s$ and $\rho((I+\Delta \tau A^s)^{-1})<1$.

\end{itemize}

\section{Intelligent Automatic Differentiation}\label{sect:haad}

In this section, we describe the algorithm underlying \texttt{ADCME}. As depicted in \Cref{fig:cgraph}, the individual operators are compiled together in an acyclical computational graph with an automatic differentiation framework, which enables automatic gradient back-propagation. For each operator $y = F(x)$, if the operator is in the path between the unknown parameter $w$ and final output $L$, it requires a gradient transformation feature, i.e., \texttt{backward} function, that can compute $\frac{\partial L}{\partial x}$ given $\frac{\partial L}{\partial y}$. There are two ways to implement this \texttt{backward} function in \texttt{ADCME}: (1) automatic differentiation, and (2) custom built adjoint method codes. In what follows, we briefly introduce reverse model automatic differentiation and then present how our \texttt{backward} function is implemented in custom operators.

\subsection{Automatic Differentiation}

We use a gradient-based method to minimize the objective function \Cref{equ:J}. We can derive and implement the gradients directly by adjoint state methods. However, as the system grows more complex, it requires tremendous effort for derivation, implementation and debugging. We tackle the problem by developing \texttt{ADCME}, which uses the automatic differentiation capacity of the modern machine learning tool, e.g., \texttt{TensorFlow}~\cite{girija2016tensorflow}. 

\texttt{ADCME} provides us a way of implementing numerical schemes and computing the gradients 
$\frac{\partial \mathcal{J}}{\partial w}$ using reverse mode automatic differentiation~(AD)~\cite{baydin2018automatic}.  AD applies symbolic differentiation at the elementary operation level. In AD, all numerical computations are ultimately compositions of a finite set of elementary operations for which derivatives are known, and combining the derivatives of the constituent operations through the chain rule yields the derivative of the overall composition. In a nutshell, researchers do not need to worry about deriving and implements derivatives. Most of the differentiation and optimization processes are taken care of by the software. \texttt{ADCME} also allows users to implement custom forward simulation and gradients operators, which is quite useful for incorporating custom built codes without much modification.

\subsection{Incorporating Custom Built Adjoint Method Codes via Custom Operators}

A naive implementation in \texttt{ADCME} with the \texttt{TensorFlow} backend without code vectorization is inefficient and infeasible for large-scale problem. Even when the code is vectorized, we may not be able to compute the gradients due to memory or computation constraints. One solution is to implement a custom operator, which we have used for the FWI component and one-step forward simulation of advection-diffusion equation. 

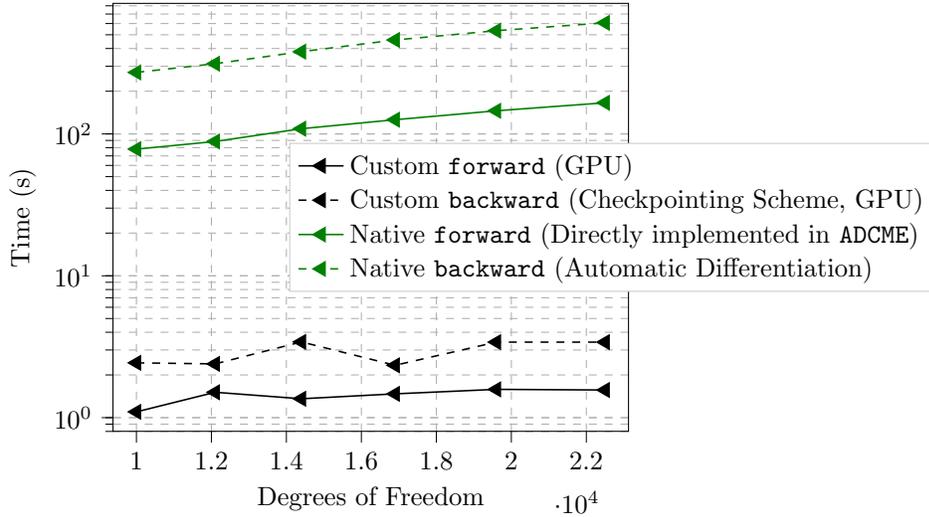
\begin{figure}[htpb]
\centering
\scalebox{1.0}{
\begin{tikzpicture}

\begin{axis}[
legend cell align={left},
legend style={at={(1.6,0.5)}, anchor=east, draw=white!80.0!black},
log basis y={10},
tick align=outside,
tick pos=left,
grid = both,
grid style={dashed,gray},
x grid style={white!69.01960784313725!black},
xlabel={Degrees of Freedom},
xmin=9375, xmax=23125,
xtick style={color=black},
y grid style={white!69.01960784313725!black},
ylabel={Time (s)},
ymin=0.801147995202948, ymax=832.290727341659,
ymode=log,
ytick style={color=black},
]
\addplot [semithick, black, mark=triangle*, mark size=3, mark options={solid,rotate=90}]
table {%
10000 1.098573753
12100 1.510011265
14400 1.358987617
16900 1.471672724
19600 1.583162684
22500 1.566282273
};
\addlegendentry{Custom \texttt{forward} (GPU)}
\addplot [semithick, black, dashed, mark=triangle*, mark size=3, mark options={solid,rotate=90}]
table {%
10000 2.433655199
12100 2.393301131
14400 3.419066701
16900 2.339784587
19600 3.407999645
22500 3.408061813
};
\addlegendentry{Custom \texttt{backward} (Checkpointing Scheme, GPU)}
\addplot [semithick, green!50.0!black, mark=triangle*, mark size=3, mark options={solid,rotate=90}]
table {%
10000 78.141974648
12100 88.471512506
14400 108.818969224
16900 125.963000824
19600 145.582793621
22500 165.79505068
};
\addlegendentry{Native \texttt{forward}  (Directly implemented in \texttt{ADCME})}
\addplot [semithick, green!50.0!black, dashed, mark=triangle*, mark size=3, mark options={solid,rotate=90}]
table {%
10000 271.406630611
12100 312.662992337
14400 380.114894473
16900 459.434523054
19600 534.385552022
22500 606.957926871
};
\addlegendentry{Native \texttt{backward} (Automatic Differentiation)}
\end{axis}

\end{tikzpicture}}
\caption{Comparison of time consumed for the optimized operator and its automatic differentiation counterparts. Compared to the native automatic differentiation implementation~(\Cref{sect:code}), the custom operator enjoys more than 100 times acceleration and scales better with respect to the degrees of freedom.}
\label{fig:benchmark}
\end{figure}

It is crucial to allow for this kind of flexibility. In our case the FWI operation is the most time consuming operator. We have implemented a heavily optimized version of FWI using CUDA. During the forward time stepping, unlike the usual automatic differentiation where all the intermediate data must be retained for computing gradients, we only save information around the boundary. This information is then used to reconstruct the intermediate data during back-propagation. Thus we have saved the scarce memory resources on GPU and take advantage of the high FLOPS in the parallel computing. The process can also be viewed as a special form of checkpointing scheme~\cite{charpentier2001checkpointing}. For example, we compare the time consumed for the simulation of wave propagation and its gradient computation given different degrees of freedoms. The result is shown in \Cref{fig:benchmark} for 8 sources and 8 receivers and we observe more than 100 times acceleration for an optimized operator~(the manually implemented adjoint method) compared to its automatic differentiation counterparts on CPUs. For detailed setup for numerical computations, see \Cref{sect:ap}.

\subsection{Example: Implementing \texttt{backward} for \Cref{equ:J}}

First we describe the strategies for implementing the gradients for the objective function subject to the constraints in \Cref{equ:J} using automatic differentiation. We used the \textit{discretize-then-optimize} approach. We first discretize the wave equation and the advection-diffusion equation as discussed in \Cref{sect:d_ad}. The advection-diffusion equation involves unknown parameters $w$. We assume the initial configuration $\mathbf{m}_{t_1}=\mathbf{m}_0$ is given, but it can also be an unknown variable as well. We use bold $\mathbf{m}$ since it is the discretized version of $m$. To make the discussion clearer, we first introduce the notation for partial derivatives: $\frac{\partial f}{\partial \bx}$ and $\frac{Df}{D\bx}$. The first notation treats $\bx$ as independent variable of $f$ and the second treats $\bx$ as dependent variable, i.e., $f$ has one or more arguments that depends on $\bx$. For example, for $z = f(x, y)$, if $y$ is a function of $x$, i.e., $y=y(x)$, then we have
\begin{align}
	\frac{\partial f(x,y)}{\partial x} &= f_1(x,y)\\
	\frac{Df(x,y)}{Dx} &= f_1(x,y) + f_2(x,y)y'(x)
\end{align}
here $f_1(x,y)=\lim_{\Delta x\rightarrow 0}\frac{f(x+\Delta x, y)-f(x,y)}{\Delta x}$, $f_2(x,y)=\lim_{\Delta y\rightarrow 0}\frac{f(x, y+\Delta y)-f(x,y)}{\Delta y}$.

Therefore, by the chain rule, when computing the gradients of $\mathcal{J}$ with respect to $w$, we have
\begin{align}
	\frac{{D \mathcal{J}}}{{D w}} =& \sum\limits_{i = 1}^{{N_l}} {\frac{{\partial \mathcal{J}}}{{\partial {\mathbf{m}_{{t_i}}}}}} \frac{{D {\mathbf{m}_{{t_i}}}}}{{D w}}\\
	\frac{{D {\mathbf{m}_{{t_i}}}}}{{D w}} = & \frac{{\partial{\mathbf{m}_{{t_i}}}}}{{\partial w}} + \frac{{\partial {\mathbf{m}_{{t_i}}}}}{{\partial {\mathbf{m}_{{t_{i - 1}}}}}}\frac{{D {\mathbf{m}_{{t_{i - 1}}}}}}{{D w}}
\end{align}

In our example, the gradients $\frac{{\partial \mathcal{J}}}{{\partial {\mathbf{m}_{{t_i}}}}}$ can be computed using the standard FWI method. The gradients $\frac{{\partial {\mathbf{m}_{{t_{k + 1}}}}}}{{\partial {\mathbf{m}_{{t_k}}}}}$ can also be computed using the adjoint state method. \revise{Technically, we only need to implement one step forward propagation \Cref{equ:dis_ad} and the associated gradient operator, and the overall simulation for the hidden dynamics involves repeatedly calling this operator.} Since for reverse mode automatic differentiation, there is no need to form the Jacobian explicit, we only need to implement the gradients of a functional, $\mathcal{J}$, with respect to ${\mathbf{m}_{{t_k}}^n}$, $w$, assuming $\frac{{\partial \mathcal{J}}}{{\partial \mathbf{m}_{{t_k}}^{n + 1}}}$ is known. Here the superscript $n$ in $\mathbf{m}_{t_k}^n$ denotes the time step between $t_k$, $t_{k+1}$ because the evolution from $\mathbf{m}_{t_k}$ to 
$\mathbf{m}_{t_{k+1}}$ is computed with a time-stepping method.
\begin{equation}\label{equ:gradJ}
    \begin{aligned}
	 & \frac{{\partial \mathcal{J}}}{{\partial \mathbf{m}_{{t_k}}^n}} = \frac{{\partial \mathcal{J}}}{{\partial \mathbf{m}_{{t_k}}^{n + 1}}}\frac{{\partial \mathbf{m}_{{t_k}}^{n + 1}}}{{\partial \mathbf{m}_{{t_k}}^n}}  \cr 
  & \frac{{\partial \mathcal{J}}}{{\partial w}} = \frac{{\partial \mathcal{J}}}{{\partial \mathbf{m}_{{t_k}}^{n + 1}}}\frac{{\partial \mathbf{m}_{{t_k}}^{n + 1}}}{{\partial w}}
\end{aligned}
\end{equation}

We remark that even for FWI and \Cref{equ:gradJ}, we can avoid implementing the gradients by using the automatic differentiation provided by \texttt{TensorFlow}. \revise{However, more often than not, we need to write vectorized codes for efficiency if we use \texttt{TensorFlow} directly, which makes it cumbersome to implement some boundary conditions and apply special linear solvers~(such as algebraic multi-grid method). Instead, we can design custom operators and embed the \texttt{C++} codes into the workflow. As an example, consider implementing a differentiable operator that maps  $\mathbf{m}_{{t_k}}^n$ to $\mathbf{m}_{{t_k}}^{n+1}$, with \texttt{ADCME}, after generating the custom operator template using command \texttt{customop} in \texttt{ADCME}, the user implements two functions \texttt{forward} and \texttt{backward}}
\begin{align}
	\mathbf{m}_{{t_k}}^{n+1} &= \texttt{forward}(\mathbf{m}_{{t_k}}^{n}, w)\\
	\frac{{\partial \mathcal{J}}}{{\partial \mathbf{m}_{{t_k}}^n}},\ \frac{{\partial \mathcal{J}}}{{\partial w}} &= \texttt{backward}\left(\frac{{\partial \mathcal{J}}}{{\partial \mathbf{m}_{{t_k}}^{n + 1}}},\mathbf{m}_{{t_k}}^{n+1},  \mathbf{m}_{{t_k}}^{n}, w\right) 
\end{align}
the backward operator can be implemented using the chain rule.

The forward simulation operators and gradients are eventually integrated by \texttt{TensorFlow} automatically. It enables an easy-to-use and friendly interface for the user and one is free to solve the optimization problem \Cref{equ:J} using any gradient based optimizers, such as gradient descent methods, \texttt{L-BFGS-B}, etc.

\subsection{While Loops}

A critical ingredient for the dynamical system is to avoid writing explicit loops, which results in large computational graphs according to reverse mode automatic differentiation. Fortunately, \texttt{TensorFlow} provides a very powerful tool to do this kind of tasks: \texttt{while\_loop}. Instead of creating new subgraphs for every iteration, \texttt{TensorFlow} introduces a single special control flow structure to the computational graph. For technical details, we refer the readers to the paper \cite{whitepap56:online}. In this way, the loops can be done efficiently. We show such an example in \Cref{sect:code}.

\section{Stability Analysis}\label{sect:stab}

In this section, we consider the stability for the gradient computation of hidden dynamics with memory. For example, the time-fractional partial differential equation falls into this category. Given a dynamic system, assume that the state function $u(\bx, t)$ is discretized at $0=t_0\leq t_1 \leq \ldots \leq t_n = T$ with $t_{i+1}-t_i=\Delta t$, $n\Delta t = T$ and spatial locations $\{\bx_i\}_{i=1}^M$, $u^i_j$ is the approximation to $u(\bx_i, t_j)$. The discrete numerical scheme for computing $\bu^i = \{u^i_j\}$ is 
\begin{equation}\label{equ:Fi}
  \begin{aligned}
	F_1(\bu^1, \bu^0, \bt; \Delta t) &=0 \\
	F_2(\bu^2, \bu^1, \bu^0, \bt; \Delta t) &=0 \\
	\ldots \\
	F_n(\bu^n, \bu^{n-1},\ldots, \bu^0, \bt; \Delta t) &=0 
\end{aligned}
\end{equation}
where $F_i:\RR^M\rightarrow \RR^M$ can be linear or nonlinear in $\bu^0$, $\bu^1$, $\ldots$, $\bu^i$ and $\bt$ is the unknown parameter. $\bu^i$ can be obtained by solving the $i$-th equation. We assume a multi-step implicit form so that it allows a broad class of numerical schemes.

The quantity of interest, $J$, usually the loss function, is an explicit function of $\bu^0$, $\bu^1$, $\ldots$, $\bu^n$
\begin{equation}
	J = J(\bu^0, \bu^1, \ldots, \bu^n)
\end{equation}
which depends on $\bt$ through $\bu^0$, $\bu^1$, $\ldots$, $\bu^n$.

We minimize $J$ through a first-order or quasi-second order method, which requires the gradients
\begin{equation}
	\frac{DJ}{D\bt} = \sum\limits_{i = 0}^n {\frac{{\partial J}}{{\partial {{{\bu}}^i}}}\frac{{D{{{\bu}}^i}}}{{D\bt}}} 
\end{equation}

\Cref{fig:J} shows an example where $F_i$ only depends on $\bu^i$, $\bu^{i-1}$, $\bt$ and $\Delta t$, i.e.,\ a single-step scheme. In this example, $\frac{\partial \bu^{n-1}}{\partial \bt}$ denotes a partial derivative where the independent variables are $\bt$ and all the previous $\bu^i$ ($i \le n-2$). $\frac{D \bu^{n-1}}{D \bt}$ denotes the derivative with respect to $\bt$ where now all the variables $\bu^i$ are assumed to be functions of $\bt$ (that is, they are considered dependent variables).

\begin{figure}[hbtp]
\centering
  \includegraphics[width =0.5\textwidth]{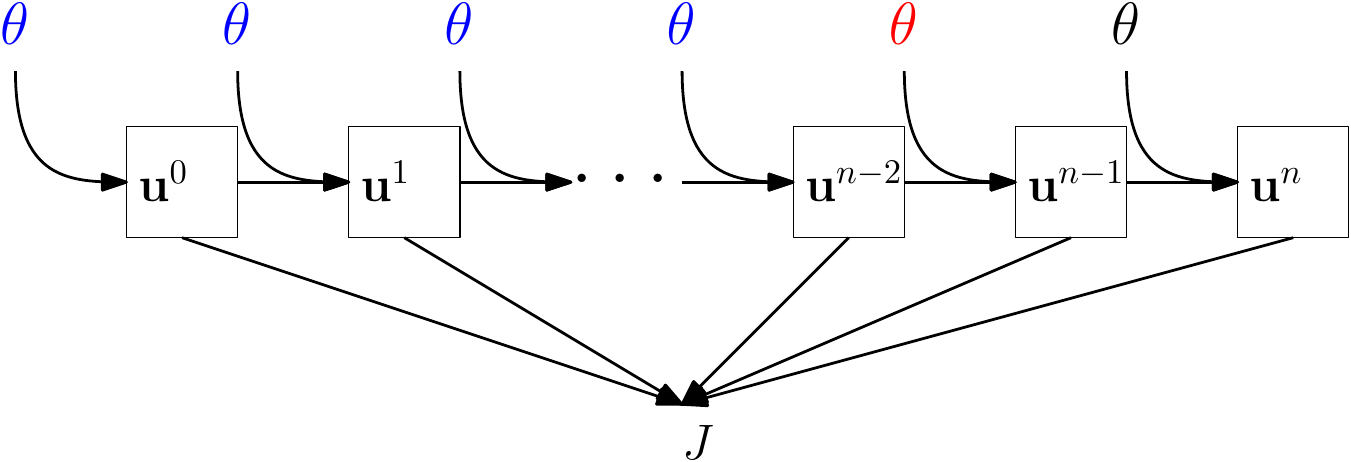}
  \caption{Schematic diagram for $\frac{D}{D\bt}$ and $\frac{\partial}{\partial\bt}$. In this example, when defining $\frac{\partial \bu^{n-1}}{\partial \bt}$ the independent variables are {$\bt$} and $\bu^i$, $i \le n-2$. While in $\frac{D \bu^{n-1}}{D \bt}$, the only independent variable is $\bt$ (all $\bu^i$, $i \le n-2$ are considered functions of $\bt$).}
  \label{fig:J}
\end{figure}

Now we can define the stability of the method. The proof can be found in \Cref{sect:proof}.
\begin{definition}[Stability]
	Consider the discrete numerical scheme \newline \Cref{equ:Fi} for approximating $u(\bx,t)$ for $t\in [0,T]$. We say that the numerical scheme is stable if there exists a constant, independent of $\Delta t$, such that
	\begin{equation}
		\left\|\frac{DJ}{D\bt}\right\| \leq C \left(\sum_{i=0}^n\left\| \frac{\partial J}{\partial \bu^i} \right\|^2\right)^{\frac{1}{2}}
	\end{equation}
	where $\|\cdot\|$ is 2-norm. 
\end{definition}

The definition requires that the gradients do not explode as $\Delta t\rightarrow 0$. Now we consider a special case where $F_i$, $J$ has the form
\begin{equation}\label{equ:Fiex}
\begin{aligned}
	F_i(\bu^i, \bu^{i-1}, \ldots, \bu^0, \bt; \Delta t) &=  A_i \bu^i -\sum_{j=0}^{i-1} a_{ij} \bu^{j} \\
	J &=  J(\bu^n)
\end{aligned}
\end{equation}
the operators $F_i$, $J$ in this paper have the form in \Cref{equ:Fiex}. 

\begin{theorem}\label{thm:main}
	Let $F_i$, $J$ have the form in \Cref{equ:Fiex}. Assume $C_1$, $C_2$, $C_0'$ are positive constants independent of $i$ and $\Delta t$,  $A_i$, $a_{ij}$ satisfy 
	\begin{enumerate}
		\item $\sum_{j=0}^{i-1} a_{ij} <1$, $a_{ij}\geq 0$, $\forall 0\leq j<i$;
		\item $\rho(A_i^{-1}) \leq 1- C_1(\Delta t)^\alpha$, where $\rho$ is the spectrum radius;
		\item $\left\| \frac{\partial\bu^i}{\partial\bt} \right\| \leq C_2(\Delta t)^\alpha$ for a constant $\gamma>0$.
		\item $\dfrac{\mathop {\max }\limits_{0 \leqslant i \leqslant n} \left\| {\frac{{\partial {{\bu}^i}}}{{\partial \bt}}} \right\|}{1-\mathop {\max }\limits_{0 \leqslant i \leqslant n} \rho (A_i^{ - 1})} \geq C_0'$
	\end{enumerate}
	here $\alpha\in (0,1]$; then the numerical scheme is stable. 
\end{theorem}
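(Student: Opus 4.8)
The plan is to reduce the stability inequality to a single uniform-in-$\Delta t$ bound on $\|D\bu^n/D\bt\|$, and then to obtain that bound from a discrete Grönwall / geometric-series estimate on a recursion coming from the implicit update, in which conditions~1--3 are exactly what make the constant independent of $\Delta t$. For the reduction, note that in \Cref{equ:Fiex} we have $J=J(\bu^n)$, so $\partial J/\partial\bu^i=0$ for $i<n$; hence $\bigl(\sum_{i=0}^n\|\partial J/\partial\bu^i\|^2\bigr)^{1/2}=\|\partial J/\partial\bu^n\|$ and, by the chain rule, $\frac{DJ}{D\bt}=\frac{\partial J}{\partial\bu^n}\frac{D\bu^n}{D\bt}$. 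Thus if we can show $\|D\bu^n/D\bt\|\le C$ with $C$ independent of $\Delta t$ (and of the adjoint data $\partial J/\partial\bu^n$), then $\|DJ/D\bt\|\le\|\partial J/\partial\bu^n\|\,\|D\bu^n/D\bt\|\le C\|\partial J/\partial\bu^n\|$, which is the claim.

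Next I would set up the recursion. From $A_i\bu^i=\sum_{j<i}a_{ij}\bu^j$, differentiating with $\bt$ and the remaining $\bu^k$ held fixed gives $\partial\bu^i/\partial\bu^j=a_{ij}A_i^{-1}$, while $\partial\bu^i/\partial\bt$ is precisely the quantity appearing in condition~3. The chain rule (the same $\partial$ versus $D$ bookkeeping as in \Cref{fig:J}) yields
\[
  \frac{D\bu^i}{D\bt}=\frac{\partial\bu^i}{\partial\bt}+\sum_{j=0}^{i-1}a_{ij}A_i^{-1}\frac{D\bu^j}{D\bt}.
\]
Writing $b_i:=\|D\bu^i/D\bt\|$, $g:=\max_i\|\partial\bu^i/\partial\bt\|$ and $r:=\max_i\rho(A_i^{-1})$, and using condition~1 ($a_{ij}\ge0$, $\sum_{j<i}a_{ij}<1$), condition~2 ($\|A_i^{-1}\|\le\rho(A_i^{-1})\le 1-C_1(\Delta t)^\alpha$, so $r<1$), and condition~3 ($g\le C_2(\Delta t)^\alpha$), taking norms gives the scalar recursion $b_i\le g+r\max_{0\le j<i}b_j$, with base case $b_0=\|\partial\bu^0/\partial\bt\|\le g$ since $\bu^0$ has no predecessors.

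Solving this is elementary: with $B_i:=\max_{0\le j\le i}b_j$, the inequality $b_i\le g+rB_{i-1}$ and $r<1$ give $B_i\le\max\!\bigl(B_{i-1},\,g/(1-r)\bigr)$, hence $B_n\le\max\!\bigl(b_0,\,g/(1-r)\bigr)$. Now the cancellation occurs: $g/(1-r)\le C_2(\Delta t)^\alpha/\bigl(C_1(\Delta t)^\alpha\bigr)=C_2/C_1$, a constant independent of $\Delta t$, and condition~4 is used to absorb the base-case term $b_0$ into $g/(1-r)$ so that the final constant can be stated cleanly as $C=O(C_2/C_1)$ (for $\Delta t$ below any fixed threshold). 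Combined with the reduction above, this proves stability. The main obstacle is precisely this uniform-in-$\Delta t$ control: the contraction factor $r=1-C_1(\Delta t)^\alpha\to1$, so a naive geometric series $\sum_k r^k\sim(\Delta t)^{-\alpha}$ diverges, and only because the per-step forcing $\|\partial\bu^i/\partial\bt\|=O((\Delta t)^\alpha)$ decays at the matching rate does the product stay $O(1)$; getting this right requires handling the memory terms (the sum $\sum_{j<i}a_{ij}$ has $O(n)$ terms in the time-fractional scheme) via condition~1 so that only one factor of $r$, not a growing number, enters per level. A secondary technical point is justifying $\|A_i^{-1}\|\le\rho(A_i^{-1})$ in the relevant norm, which is immediate when $A_i$ is symmetric (the diffusion and fractional-Laplacian cases) and otherwise needs the norm to be adapted to $A_i$.
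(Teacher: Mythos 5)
Your proposal is correct and follows essentially the same route as the paper: the reduction to bounding $\left\|\frac{D\bu^n}{D\bt}\right\|$, the identity $\frac{\partial \bu^i}{\partial \bu^j}=a_{ij}A_i^{-1}$, the recursion $\left\|\frac{D\bu^i}{D\bt}\right\|\le \Theta+\rho_{\max}\sum_{j<i}a_{ij}\left\|\frac{D\bu^j}{D\bt}\right\|$, and the key cancellation $\frac{C_2(\Delta t)^\alpha}{C_1(\Delta t)^\alpha}$ from assumptions 2--3 all match the paper's argument. The only differences are cosmetic: you close the recursion with a direct $\max$-based bound (making assumption 4 essentially unnecessary) where the paper introduces an auxiliary sequence $\tilde Q_i$, rescales by $\Theta/(1-\rho_{\max})$, and invokes assumption 4 to fix the constant, and you explicitly flag the $\|A_i^{-1}\|$ versus $\rho(A_i^{-1})$ point that the paper uses tacitly.
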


\begin{remark}
	The first three assumptions are easy to understand, and they are usually satisfied for a stable numerical scheme. The third assumption assumes that the gradients of $\bu^i$ with respect to $\bt$ do not explode, which  makes the gradient-based optimizer break. The last assumption, on the other hand, assumes that the parameter $\bt$ is learnable: the maximum of $\left\| \frac{\partial\bu^i}{\partial\bt} \right\|$ cannot vanish otherwise there is insufficient update for $\bt$; meanwhile, the maximum $\rho(A_i^{-1})$ cannot be too small, otherwise the back-propagation will be damped too much and gradients cannot be efficiently transmitted. 
\end{remark}

The following two results are direct applications of \Cref{thm:main}
\begin{corollary}[Stability for the advection diffusion equation]
	The scheme proposed in \Cref{equ:scheme0} is stable.
\end{corollary}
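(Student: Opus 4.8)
The plan is to verify that the discrete advection-diffusion scheme \Cref{equ:scheme0} is an instance of the abstract framework \Cref{equ:Fiex} and then check the four hypotheses of \Cref{thm:main}. First I would recast \Cref{equ:scheme0} in the notation of \Cref{equ:Fiex}: here $F_i(\bu^i,\bu^{i-1},\ldots,\bu^0,\bt;\Delta t) = (I+\Delta\tau A)\bu^i - \bu^{i-1}$, so $A_i = I + \Delta\tau A$ for every $i$, and the only nonzero coefficient is $a_{i,i-1}=1$. Wait --- that gives $\sum_{j} a_{ij} = 1$, not strictly less than $1$, so a small technical adjustment is needed: one should observe that the advection-diffusion operator $A$ has strictly positive ``mass loss'' at the Dirichlet boundary, or equivalently one absorbs a factor slightly below $1$; alternatively one rescales so that $a_{i,i-1} = 1-\epsilon$ for an arbitrarily small $\epsilon>0$ coming from the boundary conditions. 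This is where I expect the main (though minor) obstacle to lie: matching the strict inequality in assumption (1) with a scheme whose natural coefficient is exactly $1$.

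Next I would check assumption (2). Since $-A$ is the discrete advection-diffusion operator and the scheme is stated to be unconditionally stable, we have $\rho((I+\Delta\tau A)^{-1}) < 1$ for all $\Delta\tau > 0$; the finer estimate $\rho(A_i^{-1}) \le 1 - C_1(\Delta\tau)^\alpha$ with $\alpha = 1$ follows from the fact that the eigenvalues $\lambda$ of $A$ have real part bounded below by a positive constant $c>0$ (coming from the diffusion term $a>0$ together with the homogeneous Dirichlet boundary condition, which makes the discrete Laplacian positive definite), so that $|(1+\Delta\tau\lambda)^{-1}| \le (1+c\Delta\tau)^{-1} \le 1 - c'\Delta\tau$ for small $\Delta\tau$. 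Here I would be a little careful that the advection part, being skew-symmetric up to boundary terms, does not spoil the lower bound on the real part of the spectrum; a standard energy/field-of-values argument handles this.

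Then assumption (3): I need $\|\partial\bu^i/\partial\bt\| \le C_2(\Delta\tau)^\alpha$. The parameter dependence enters only through the initial condition $m_0(\bx;w)$ (and possibly through $A$ via $b_1,b_2,a$). Differentiating the recursion $(I+\Delta\tau A)\bu^{i} = \bu^{i-1}$ formally with respect to $\bt$, holding the earlier iterates as independent, the ``partial'' derivative $\partial\bu^i/\partial\bt$ picks up exactly one factor of $(I+\Delta\tau A)^{-1}$ acting on $\partial_\bt$ of the right-hand side together with the term $-(I+\Delta\tau A)^{-1}(\Delta\tau\,\partial_\bt A)\bu^i$; both contributions carry a factor of $\Delta\tau$ (the second explicitly, the first because moving from $\bu^{i-1}$ to $\bu^i$ is an $O(\Delta\tau)$ perturbation when the earlier iterate is frozen), giving the bound with $\alpha = 1$. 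Finally, assumption (4) asserts learnability: $\max_i\|\partial\bu^i/\partial\bt\|$ and $1 - \max_i\rho(A_i^{-1})$ are both of order $\Delta\tau$ by the estimates just obtained, so their ratio is bounded below by a positive constant $C_0'$ independent of $\Delta\tau$. With all four hypotheses verified (taking $\alpha=1$), \Cref{thm:main} applies directly and yields stability of \Cref{equ:scheme0}. I would present the argument in this order, flagging the $a_{i,i-1}=1$ versus $<1$ point as the only place requiring genuine care.
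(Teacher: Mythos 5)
Your overall route --- recasting \Cref{equ:scheme0} in the form \Cref{equ:Fiex} and checking the four hypotheses of \Cref{thm:main} directly --- is not what the paper does: the paper disposes of this corollary in one line, as the special case $\alpha=1$ of \Cref{coro:1}, whose detailed verification (done in the appendix for the time-fractional scheme \Cref{equ:tscheme}, with $b_1=b_2=0$) already contains the substance of your assumptions (2)--(4) checks. Your verifications of (2), (3) (parameter dependence entering through $A$ via $a,b_1,b_2$, giving an explicit factor $\Delta\tau$ after freezing the earlier iterates, plus a forward bound on $\|\bu^i\|$) and (4) are in the same spirit as the paper's appendix argument and are essentially sound, modulo stating the forward-solution bound you implicitly use.

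The genuine gap is exactly the point you flag but do not resolve: for the one-step scheme the only nonzero coefficient is $a_{i,i-1}=1$, so $\sum_j a_{ij}=1$ and hypothesis (1) of \Cref{thm:main}, which demands a strict inequality, fails. Neither of your proposed patches works as stated. The ``mass loss'' at the Dirichlet boundary is already encoded in $A_i=I+\Delta\tau A$ (that is precisely what gives $\rho(A_i^{-1})\le 1-C_1\Delta\tau$ in hypothesis (2)); it cannot be moved into the coefficients $a_{ij}$, which are fixed by the algebraic form of the scheme. And rescaling $a_{i,i-1}$ to $1-\epsilon$ changes the scheme, so any stability conclusion would apply to a different recursion. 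Two honest ways to close the gap: (a) observe that the proof of \Cref{thm:main} only uses $\sum_j a_{ij}\le 1$ --- the induction step $Q'_i = 1-\rho_{\max}+\rho_{\max}\sum_j a_{ij}Q'_j \le 1-\rho_{\max}+\rho_{\max}C\le C$ goes through verbatim with the non-strict inequality once $C\ge 1$ --- so the theorem applies to \Cref{equ:scheme0} with hypothesis (1) relaxed; or (b) follow the paper and view \Cref{equ:scheme0} as the $\alpha=1$ instance of the fractional scheme treated in \Cref{coro:1}, where for $\alpha<1$ the computed sum $1+(i-1)^{1-\alpha}-(i+1)^{1-\alpha}$ is genuinely below $1$. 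As written, your argument stops short of either repair, so hypothesis (1) remains unverified for the scheme you actually analyze.
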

\begin{proof}
	This is a special case of \Cref{coro:1} when $\alpha=1$.
\end{proof}

\begin{corollary}[Stability for the fractional Laplacian equation]
	The scheme proposed in \Cref{equ:fscheme} is stable.
\end{corollary}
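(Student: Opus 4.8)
The plan is to recognize that the scheme \Cref{equ:fscheme} for the space-fractional diffusion equation has exactly the structural form required by \Cref{thm:main} with $\alpha = 1$, and then to verify its four hypotheses one by one. Writing the update as $(I + \Delta\tau A^s)\bu^{n+1} = \bu^n$, I would set $A_i := I + \Delta\tau A^s$ for all $i$, $a_{i,i-1} := 1$, and $a_{ij} := 0$ for $j < i-1$; this is a single-step scheme, so $J = J(\bu^n)$ and the form \Cref{equ:Fiex} holds with these coefficients. Hypothesis (1) is immediate: the $a_{ij}$ are nonnegative and their sum is $1$, which satisfies the requirement (read with the non-strict inequality that the single-step advection-diffusion corollary already relies upon, since that scheme also has $\sum_j a_{ij} = 1$).

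Next I would handle Hypothesis (2). The operator $A^s$ is, via the Fourier spectral construction in \Cref{sect:d_ad}, diagonalized by the discrete Fourier transform with nonnegative eigenvalues $|\bm\xi|^{2s} \geq 0$; hence the eigenvalues of $A_i = I + \Delta\tau A^s$ are $1 + \Delta\tau|\bm\xi|^{2s} \geq 1$, and $\rho(A_i^{-1}) = \max_{\bm\xi} (1+\Delta\tau|\bm\xi|^{2s})^{-1} \leq 1$. To get the sharper bound $\rho(A_i^{-1}) \leq 1 - C_1\Delta\tau$ demanded by the theorem, I would use the smallest nonzero frequency mode: on the square domain with homogeneous Dirichlet data and the Fourier spectral discretization, the smallest eigenvalue of $A^s$ is $\lambda_{\min} = \xi_{\min}^{2s} > 0$, a fixed positive number independent of $\Delta\tau$ (it depends only on the domain size $L$ and on $s$). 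Then $\rho(A_i^{-1}) = (1 + \Delta\tau\lambda_{\min})^{-1} \leq 1 - \Delta\tau\lambda_{\min}/(1+\Delta\tau_{\max}\lambda_{\min})$ for $\Delta\tau$ in a bounded range, so one may take $C_1 = \lambda_{\min}/(1+\Delta\tau_{\max}\lambda_{\min})$. This is exactly the same reasoning that underlies \Cref{coro:1} for the advection-diffusion case, just with the discrete Laplacian replaced by $A^s$.

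For Hypotheses (3) and (4), note that $A^s$, and therefore the whole scheme, carries no dependence on the parameter $\bt$ in the way \Cref{equ:fl} is posed here — or, if one keeps the diffusivity $a$ as the learnable parameter $\bt$, then $\partial\bu^i/\partial\bt$ is obtained by differentiating $(I+\Delta\tau a (-\Delta)^s_h)\bu^{i+1} = \bu^i$ through the time steps, which introduces one factor of $\Delta\tau$ per step and a geometric damping factor $\rho(A_i^{-1}) < 1$, yielding $\|\partial\bu^i/\partial\bt\| \leq C_2\Delta\tau$; this gives Hypothesis (3) with $\alpha = 1$. Hypothesis (4), the learnability condition, then follows because both the numerator $\max_i\|\partial\bu^i/\partial\bt\|$ and the denominator $1 - \max_i\rho(A_i^{-1})$ are $\Theta(\Delta\tau)$, so their ratio is bounded below by a positive constant $C_0'$. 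With all four hypotheses checked and $\alpha = 1 \in (0,1]$, \Cref{thm:main} applies and the scheme is stable. I expect the main obstacle to be pinning down Hypothesis (4) cleanly: it requires matching the rate at which $\|\partial\bu^i/\partial\bt\|$ vanishes against the rate at which $1-\rho(A_i^{-1})$ vanishes, and making sure the constants in \Cref{thm:main}'s proof are genuinely uniform in $\Delta\tau$ — but since both quantities scale linearly in $\Delta\tau$ here (the $\alpha=1$ case), this reduces to a bounded-ratio argument rather than anything delicate, and the cleanest route is simply to invoke \Cref{coro:1} verbatim with $A$ replaced by $A^s$.
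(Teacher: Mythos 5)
Your proposal is correct and follows essentially the same route as the paper: the paper's own proof simply observes that the analysis of \Cref{coro:1} (in the $\alpha=1$, advection--diffusion form) carries over verbatim with $A$ replaced by $A^s$, since $A^s$ is independent of $\Delta t$ and has strictly positive spectrum, which is exactly the hypothesis-by-hypothesis verification you spell out. Your side remarks (that $\sum_j a_{ij}=1$ only non-strictly satisfies assumption 1, which the theorem's proof tolerates, and that the dependence on the learned index $s$ is handled only by noting $A^s$ does not involve $\Delta t$) match the level of detail the paper itself gives.
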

\begin{proof}
	For the diffusion coefficient $a$, the analysis is essentially the same as above. Note that the space fractional index $s$ only resides in $A^s$ and $A^s$ does not depend on $\Delta t$, therefore, the assumptions are automatically satisfied as in the advection diffusion case.
\end{proof}

\begin{remark}
	We have developed the stability with emphasis on $\Delta t$. We could also develop stability concept with respect to $h$. However, since the stability due to the time evolution is most relevant for our problem, we focus on the stability with respect to $\Delta t$. 
\end{remark}

\begin{remark}
	An advantage of using implicit scheme is that the second assumption is satisfied in most schemes. We could also have certain stability conditions for explicit scheme. However, this could be troublesome as the optimizer evaluates the model for different parameters, which may lead to an inadmissible CFL condition.
\end{remark}

\section{Applications}\label{sect:ap}

As a demonstration of the flexibility of the proposed framework, we consider several models discussed in \Cref{sect:model problem}. Instead of implementing the  adjoint methods from scratch for each PDE constrained optimization model, we only need to substitute the hidden dynamics part, either implemented with custom operators or automatic differentiation. The decoupling is quite useful when we want to add more models. 

The settings for the hidden dynamics are the following. For the (fractional) advection diffusion equation, we adopt a uniform mesh for both time and space and let $\Delta\tau=0.01$, $h=3$. The degrees of freedom is $40\times 40$, with $40$ grid points per dimension. The time interval between two observations is $t_{i+1}-t_i=5\Delta \tau = 0.05$, i.e., we carry out the forward simulation for 5 times between two observations. The baseline quantity $\mathbf{m}_{\mathrm{base}}=3500$. After obtaining the $\mathbf{m}_{t_i}$ at various time, we upscale $\mathbf{m}_{t_i}$ to $150\times 150$ with bilinear interpolation to match the spatial configuration for the wave equation. We also assume $\mathbf{m}_0$ is given exactly. In addition, for testing the robustness, we add artificial i.i.d. zero-mean Gaussian noise to $\mathbf{m}_{t_i}$ before upscaling. The noise level is described by its standard deviation, which we denote $\sigma$ in the following. 

We now describe the settings for FWI. We discretize the domain $\Omega$ uniformly and the discretization points are $\bx_{ij} = [(i-1)h, (j-1)h]$, $i=1$, $2$, $\ldots$, $134$, $j=1$, $2$, $\ldots$, $384$ and $h=24$. The first dimension is the depth while the second is the distance. There are 3 phases of observations in total, where we denote as $\hat {\mathbf{d}_1}$, $\hat {\mathbf{d}_2}$ and $\hat {\mathbf{d}_3}$. In each phase, there are 30 sources and 295 receivers positioned at the depth of 768~m. The CPML boundaries with thickness of 32 grid points are placed along all four sides of the computational domain. In the inversion, we mask gradient updates in the CPML area. The source time function is a Ricker wavelet with a dominant frequency of 10~Hz. The simulation time is 5~s with a time interval of 0.0025~s.

For the optimizer, we use the built-in \texttt{L-BFGS-B} optimizer, which has been found very effective for many scientific computing problems. The stop criterion is whenever one of the following condition is satisfied: (1) The relative change in the objective function is less than $10^{-12}$; (2) The norm of projected gradient is less than $10^{-12}$; (3) maximum number of iteration 15000 is reached. The optimization for all cases converge within 30 iterations. Thanks to the \texttt{TensorFlow} backend parallelism feature, the inverse problem solver runs in parallel whenever two operations are separate in the computation dependency graph. For example, several FWI subroutines are automatically parallelized in \Cref{fig:backward}.

\subsection{Convergence Test for Automatic Differentiation}

Before we consider any applications, we perform the convergence test for automatic differentiation. Since we have considered a long and coupled dynamical system, if gradients are not implemented correctly, the error may accumulate during backpropagation. Therefore, it is crucial to verify the correctness of the gradients implementation. We apply the Taylor remainder convergence test~\cite{farrell2013automated}. Let $\bc$ be a scalar, vector or matrix, $F$ is a continuous functional that depends on $\bc$. The Taylor remainder test is based on the fact that given an arbitrary perturbation $\tilde {\bc}$ to $\bc$, we have
\begin{equation}\label{equ:first}
	\left| F(\bc+\gamma \tilde {\bc}) - F(\bc) \right| = \mathcal{O}(|\gamma|)
\end{equation}
while 
\begin{equation}\label{equ:second}
	\left| F(\bc+\gamma\tilde {\bc}) - F(\bc) - \gamma\langle\tilde {\bc}, \nabla F(\bc) \rangle \right| = \mathcal{O}(\gamma^2)
\end{equation}
where $\langle\cdot, \cdot \rangle$ denotes inner product. As long as $\langle\tilde {\bc}, \nabla F(\bc) \rangle\neq 0$, we should expect first order and second order convergence for \Cref{equ:first,equ:second}. \Cref{fig:grad_check} shows the convergence plot for the following operators respectively:
\begin{itemize}
	\item One step forward propagation of the advection diffusion equation, i.e., \Cref{equ:dis_ad}.
	\item One forward simulation of the wave equation with CPML, with the velocity field as inputs and the misfit as the output. 
\end{itemize}

In the plot, ``finite difference'' and ``automatic differentiation'' refer to \Cref{equ:first,equ:second} respectively. We see clearly that we obtain second order convergence for \Cref{equ:second}, indicating the correctness of the gradient computation.

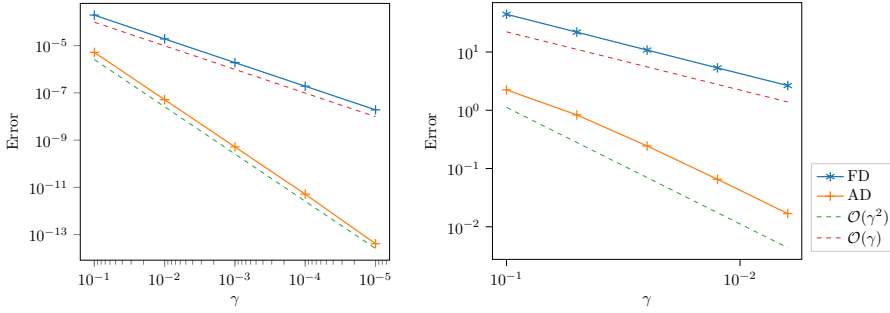
\begin{figure}[htpb]
\centering
\scalebox{0.6}{
\begin{tikzpicture}

\definecolor{color0}{rgb}{0.12156862745098,0.466666666666667,0.705882352941177}
\definecolor{color1}{rgb}{1,0.498039215686275,0.0549019607843137}
\definecolor{color2}{rgb}{0.172549019607843,0.627450980392157,0.172549019607843}
\definecolor{color3}{rgb}{0.83921568627451,0.152941176470588,0.156862745098039}

\begin{axis}[
legend cell align={left},
tick align=outside,
x dir=reverse,
tick pos=left,
x grid style={white!69.01960784313725!black},
xlabel={$\gamma$},
xmin=6.30957344480193e-06, xmax=0.158489319246111,
xmode=log,
xtick={1e-07,1e-06,1e-05,0.0001,0.001,0.01,0.1,1,10},
y grid style={white!69.01960784313725!black},
ylabel={Error},
ymin=8.28958938573402e-15, ymax=0.000613005973980258,
ymode=log,
ytick={1e-17,1e-15,1e-13,1e-11,1e-09,1e-07,1e-05,0.001,0.1},
]
\addlegendimage{no markers, color0}
\addlegendimage{no markers, color1}
\addlegendimage{no markers, color2}
\addlegendimage{no markers, color3}
\addplot [thick, color0, mark=+, mark size=3, mark options={solid}]
table [row sep=\\]{%
0.1	0.000196527060088769 \\
0.01	1.9187284381772e-05 \\
0.001	1.9140740707968e-06 \\
0.0001	1.91360852497269e-07 \\
1e-05	1.91356104295437e-08 \\
};
\addplot [thick, color1, mark=+, mark size=3, mark options={solid}]
table [row sep=\\]{%
0.1	5.17136705042347e-06 \\
0.01	5.17150779374014e-08 \\
0.001	5.17140413343352e-10 \\
0.0001	5.15945892303326e-12 \\
1e-05	4.11257091289924e-14 \\
};
\addplot [semithick, color2, dashed]
table [row sep=\\]{%
0.1	2.58568352521174e-06 \\
0.01	2.58568352521173e-08 \\
0.001	2.58568352521173e-10 \\
0.0001	2.58568352521173e-12 \\
1e-05	2.58568352521173e-14 \\
};
\addplot [semithick, color3, dashed]
table [row sep=\\]{%
0.1	9.82635300443846e-05 \\
0.01	9.82635300443846e-06 \\
0.001	9.82635300443846e-07 \\
0.0001	9.82635300443846e-08 \\
1e-05	9.82635300443846e-09 \\
};
\end{axis}

\end{tikzpicture}}~
\scalebox{0.6}{
\begin{tikzpicture}

\definecolor{color0}{rgb}{0.12156862745098,0.466666666666667,0.705882352941177}
\definecolor{color1}{rgb}{1,0.498039215686275,0.0549019607843137}
\definecolor{color2}{rgb}{0.172549019607843,0.627450980392157,0.172549019607843}
\definecolor{color3}{rgb}{0.83921568627451,0.152941176470588,0.156862745098039}

\begin{axis}[
legend cell align={left},
legend style={at={(1.03,0.03)}, anchor=south west, draw=white!80.0!black},
log basis x={10},
log basis y={10},
x dir=reverse,
tick align=outside,
tick pos=left,
x grid style={white!69.01960784313725!black},
xlabel={\(\displaystyle \gamma\)},
xmin=0.00544094102060078, xmax=0.114869835499704,
xmode=log,
xtick style={color=black},
xtick={0.0001,0.001,0.01,0.1,1,10},
xticklabels={\(\displaystyle {10^{-4}}\),\(\displaystyle {10^{-3}}\),\(\displaystyle {10^{-2}}\),\(\displaystyle {10^{-1}}\),\(\displaystyle {10^{0}}\),\(\displaystyle {10^{1}}\)},
y grid style={white!69.01960784313725!black},
ylabel={Error},
ymin=0.00275657277860811, ymax=70.4787965880313,
ymode=log,
ytick style={color=black},
ytick={0.0001,0.001,0.01,0.1,1,10,100,1000},
yticklabels={\(\displaystyle {10^{-4}}\),\(\displaystyle {10^{-3}}\),\(\displaystyle {10^{-2}}\),\(\displaystyle {10^{-1}}\),\(\displaystyle {10^{0}}\),\(\displaystyle {10^{1}}\),\(\displaystyle {10^{2}}\),\(\displaystyle {10^{3}}\)}
]
\addplot [thick, color0, mark=asterisk, mark size=3, mark options={solid}]
table {%
0.1 44.4333419799805
0.05 21.9299163818359
0.025 10.7926025390625
0.0125 5.33976745605469
0.00625 2.65400695800781
};
\addlegendentry{FD}
\addplot [thick, color1, mark=+, mark size=3, mark options={solid}]
table {%
0.1 2.23866404877565
0.05 0.832577416233526
0.025 0.243933056261294
0.0125 0.0654327146540847
0.00625 0.0168395873075111
};
\addlegendentry{AD}
\addplot [semithick, color2, dashed]
table {%
0.1 1.11933202438782
0.05 0.279833006096956
0.025 0.069958251524239
0.0125 0.0174895628810597
0.00625 0.00437239072026493
};
\addlegendentry{$\mathcal{O}(\gamma^2)$}
\addplot [semithick, color3, dashed]
table {%
0.1 22.2166709899902
0.05 11.1083354949951
0.025 5.55416774749756
0.0125 2.77708387374878
0.00625 1.38854193687439
};
\addlegendentry{$\mathcal{O}(\gamma)$}
\end{axis}

\end{tikzpicture}}~
\caption{Convergence plot for the gradients computed via finite difference~(FD) and automatic differentiation~(AD). ``Finite difference'' and ``automatic differentiation'' refers to \Cref{equ:first,equ:second} respectively. The first plot corresponds to one step forward propagation of the advection diffusion equation; the second corresponds to one forward simulation of the wave equation with CPML.}
\label{fig:grad_check}
\end{figure}

\subsection{Advection Diffusion Equation}

In \Cref{fig:us}, we show the basic settings. The first plot shows the initial configuration for $m$, which consists of 5 piecewise constant domains, surrounded by zero margins. The following plots to the right shows the result at $t=0.25$ with different noise levels $\sigma=0.0$, $5.0$, $10.0$. The corresponding governing equation is 
$$\frac{\partial m}{\partial t} = 10\Delta m + 0.1 \frac{\partial m}{\partial x} - 0.2 \frac{\partial m}{\partial y}$$
with zero Dirichlet boundary condition. The i.i.d.\ Gaussian noise is added to $m$
\begin{equation}
	\tilde{\mathbf{m}}_{t_i} = \mathbf{m}_{t_i} +  \mathbf{W}_i, \quad  \mathbf{W}_i\sim \mathcal{N}(\mathbf{0}, \sigma^2\mathbf{I})
\end{equation}

\begin{figure}[htpb]
  \includegraphics[width=1.0\textwidth]{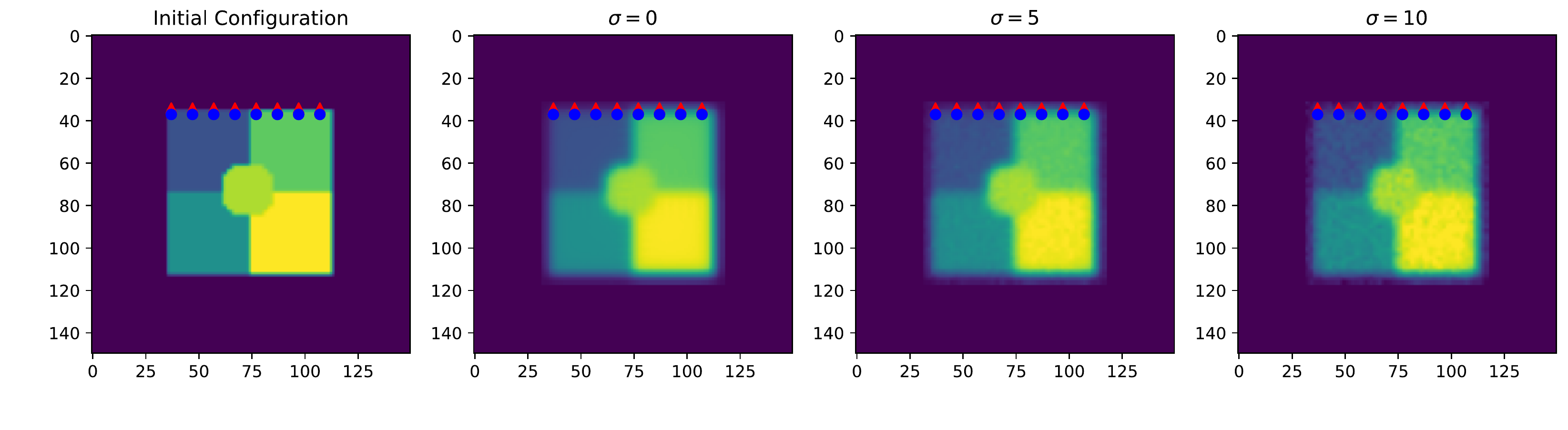}
  \caption{Basic settings for the advection diffusion equation. The red triangles (\textcolor{red}{$\blacktriangle$}) represent sources while the blue dots (\textcolor{blue}{$\bullet$}) represent receivers. The first figure shows the initial configuration $\mathbf{m}_0$, the others show the terminal configuration for different noise levels. We can see that the edges (sudden changes in $m(\cdot,t)$) are blurrier when the noise level $\sigma$ is larger.}
  \label{fig:us}
\end{figure}

In \Cref{tab:ad} we show the results for the advection diffusion equation. We can see that for the no-noise case, we recover the coefficients exactly. As we increase the noise, the converged values start to deviate from the true ones but still remain quite robust against noise. 

\begin{table}[htpb]
\centering
\begin{tabular}{@{}llll@{}}
\toprule
Governing Equation & $\sigma=0$ & $\sigma=5$ & $\sigma=10$ \\ \midrule
$\frac{\partial m}{\partial t} = \Delta m + 0.1 \frac{\partial m}{\partial x} - 0.2 \frac{\partial m}{\partial y}$ & \makecell{$a/a^*\ =1.0000$ \\ $b_1/{b_1^*}=1.0000$ \\ $b_2/{b_2^*}=1.0000$} & \makecell{$a/a^*\ =0.8740$ \\ $b_1/{b_1^*}=0.9782$ \\ $b_2/{b_2^*}=1.0146$} & \makecell{$a/a^*\ =0.7201$ \\ $b_1/{b_1^*}=0.9535$ \\ $b_2/{b_2^*}=0.9560$} \\ \hline
$\frac{\partial m}{\partial t} = 10\Delta m + 0.1 \frac{\partial m}{\partial x} - 0.2 \frac{\partial m}{\partial y}$ & \makecell{$a/a^*\ =1.0000$ \\ $b_1/{b_1^*}=1.0000$ \\ $b_2/{b_2^*}=1.0000$} & \makecell{$a/a^*\ =0.9773$ \\ $b_1/{b_1^*}=1.0022$ \\ $b_2/{b_2^*}=1.0409$} & \makecell{$a/a^*\ =0.9472$ \\ $b_1/{b_1^*}=0.9497$ \\ $b_2/{b_2^*}=0.9261$} \\ \hline
$\frac{\partial m}{\partial t} = 100\Delta m + 0.1 \frac{\partial m}{\partial x} - 0.2 \frac{\partial m}{\partial y}$&  \makecell{$a/a^*\ =1.0000$ \\ $b_1/{b_1^*}=1.0000$ \\ $b_2/{b_2^*}=1.0000$} & \makecell{$a/a^*\ =0.9808$ \\ $b_1/{b_1^*}=0.8611$ \\ $b_2/{b_2^*}=0.9845$} &  \makecell{$a/a^*\ =1.0357$ \\ $b_1/{b_1^*}=1.0560$ \\ $b_2/{b_2^*}=0.9172$}\\ \bottomrule
\end{tabular}
\caption{Result for advection diffusion equation. The first column shows the governing equations we use to generate synthetic observation data. $a^*$, $b_1^*$ and $b_2^*$ are exact values. $\sigma$ is the standard deviation in the Gaussian noise added to $m$.}
\label{tab:ad}
\end{table}

\subsection{Time fractional Advection Diffusion Equation}

For the time fractional advection diffusion case, we need to point out that the discrete adjoint state equation is much more convoluted than its integer order counterparts to derive and implement. We refer readers to \cite{antil2015fractional,maryshev2013adjoint} for adjoint methods on fractional partial differential equations. Machine learning techniques have also been applied to calibrate the space fractional index~\cite{gulian2018machine}.

\Cref{tab:ft-ad} shows the result for the time-fractional advection diffusion equation. We can see that for most cases the method discovers the true value quite accurately. However, for large noise and small $\alpha^*$, we observed that the calibration of $a$ has more than 20\% error. If $\alpha$ is too small, the dynamical system exhibits super-diffusion, and we need a larger computational domain to capture the changes. Noise has negative impact on the accuracy of the calibration. As long as $\alpha$ is reasonable large and  noise is not too large, the proposed method shows good performance on estimating the parameters, especially for the fractional indices.

\begin{table}[htpb]
\centering
\begin{tabular}{@{}llll@{}}
\toprule
Governing Equation & $\sigma=0$ & $\sigma=5$ & $\sigma=10$ \\ \midrule
${}_0^CD_t^{0.8}m = 10\Delta m $ & \makecell{$a/a^*\ =1.0000$ \\  $\quad\alpha\quad =\mathbf{0.8000}$} & \makecell{$a/a^*\ =0.9109$ \\  $\quad\alpha\quad =\mathbf{0.7993}$} & \makecell{$a/a^*\ =1.0973$ \\  $\quad\alpha\quad =\mathbf{0.8030}$}  \\ \hline
${}_0^CD_t^{0.6}m = 10\Delta m $ & \makecell{$a/a^*\ =1.0000$ \\  $\quad\alpha\quad =\mathbf{0.6000}$} & \makecell{$a/a^*\ =0.8388$ \\  $\quad\alpha\quad =\mathbf{0.5927}$} & \makecell{$a/a^*\ =1.1312$ \\  $\quad\alpha\quad =\mathbf{0.6061}$}\\ \hline
${}_0^CD_t^{0.4}m = 10\Delta m $ & \makecell{$a/a^*\ =1.0000$ \\  $\quad\alpha\quad =\mathbf{0.4000}$} & \makecell{$a/a^*\ =1.2763$ \\  $\quad\alpha\quad =\mathbf{0.4081}$} & \makecell{$a/a^*\ ={0.8447}$ \\  $\quad\alpha\quad = \mathbf{0.3908}$} \\ \hline
${}_0^CD_t^{0.2}m = 10\Delta m $ & \makecell{$a/a^*\ =0.9994$ \\  $\quad\alpha\quad =\mathbf{0.2000}$} & \makecell{$a/a^*\ =0.3474$ \\  $\quad\alpha\quad =\mathbf{0.1826}$} & \makecell{$a/a^*\ ={0.8633}$ \\  $\quad\alpha\quad =\mathbf{0.2064}$} \\   \bottomrule
\end{tabular}
\caption{Result for time-fractional advection diffusion equation. The first column shows the governing equations we use to generate synthetic observation data. $a^*$ is the exact value. $\sigma$ is the standard deviation in the Gaussian noise added to $m$, $\alpha$ is the estimated time fractional index. For large noise and small $\alpha^*$ and smaller $\alpha$, in this case the sub-diffusion effects dominate, the presence of noise makes it difficult to estimate the diffusion coefficient. It is remarkable even in the presence of large noise $\sigma=10$, the estimation of $\alpha$ is still accurate.}
\label{tab:ft-ad}
\end{table}

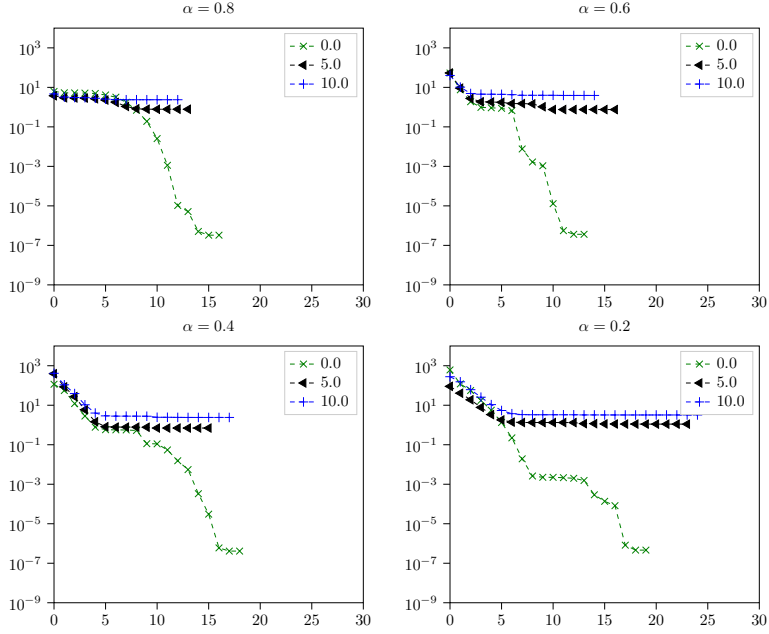
\begin{figure}[htbp]
\centering
\scalebox{0.6}{
\begin{tikzpicture}

\begin{axis}[
legend cell align={left},
legend style={draw=white!80.0!black},
log basis y={10},
tick align=outside,
tick pos=left,
title={$\alpha=0.8$},
x grid style={white!69.01960784313725!black},
xmin=0, xmax=30,
xtick style={color=black},
y grid style={white!69.01960784313725!black},
ymin=1e-09, ymax=10000,
ymode=log,
ytick style={color=black},
ytick={1e-11,1e-09,1e-07,1e-05,0.001,0.1,10,1000,100000,10000000},
yticklabels={\(\displaystyle {10^{-11}}\),\(\displaystyle {10^{-9}}\),\(\displaystyle {10^{-7}}\),\(\displaystyle {10^{-5}}\),\(\displaystyle {10^{-3}}\),\(\displaystyle {10^{-1}}\),\(\displaystyle {10^{1}}\),\(\displaystyle {10^{3}}\),\(\displaystyle {10^{5}}\),\(\displaystyle {10^{7}}\)}
]
\addplot [semithick, green!50.0!black, dashed, mark=x, mark size=3, mark options={solid}]
table {%
0 6.20019686222076
1 5.31818246841431
2 5.23593926429749
3 5.1429318189621
4 4.90514659881592
5 4.11236894130707
6 3.1892848610878
7 1.75692939758301
8 0.691647581756115
9 0.190195439383388
10 0.0255415043793619
11 0.00113238350604661
12 1.04633927549003e-05
13 5.13777843025309e-06
14 5.06995490923146e-07
15 3.28452085085473e-07
16 3.28452085085473e-07
};
\addlegendentry{0.0}
\addplot [semithick, black, dashed, mark=triangle*, mark size=3, mark options={solid,rotate=90}]
table {%
0 3.74137800931931
1 2.96497243642807
2 2.90536007285118
3 2.86921134591103
4 2.64516565203667
5 2.28692671656609
6 1.73966497182846
7 1.0819331407547
8 0.823350414633751
9 0.786543846130371
10 0.781420558691025
11 0.781021609902382
12 0.781016424298286
13 0.781016424298286
};
\addlegendentry{5.0}
\addplot [semithick, blue, dashed, mark=+, mark size=3, mark options={solid}]
table {%
0 4.66168576478958
1 3.21970421075821
2 3.06115490198135
3 3.00740933418274
4 2.80705672502518
5 2.54541540145874
6 2.38585436344147
7 2.36647206544876
8 2.36614871025085
9 2.3661293387413
10 2.36611729860306
11 2.36611723899841
12 2.36611723899841
};
\addlegendentry{10.0}
\end{axis}

\end{tikzpicture}}~
\scalebox{0.6}{
\begin{tikzpicture}

\begin{axis}[
legend cell align={left},
legend style={draw=white!80.0!black},
log basis y={10},
tick align=outside,
tick pos=left,
title={$\alpha=0.6$},
x grid style={white!69.01960784313725!black},
xmin=0, xmax=30,
xtick style={color=black},
y grid style={white!69.01960784313725!black},
ymin=1e-09, ymax=10000,
ymode=log,
ytick style={color=black},
ytick={1e-11,1e-09,1e-07,1e-05,0.001,0.1,10,1000,100000,10000000},
yticklabels={\(\displaystyle {10^{-11}}\),\(\displaystyle {10^{-9}}\),\(\displaystyle {10^{-7}}\),\(\displaystyle {10^{-5}}\),\(\displaystyle {10^{-3}}\),\(\displaystyle {10^{-1}}\),\(\displaystyle {10^{1}}\),\(\displaystyle {10^{3}}\),\(\displaystyle {10^{5}}\),\(\displaystyle {10^{7}}\)}
]
\addplot [semithick, green!50.0!black, dashed, mark=x, mark size=3, mark options={solid}]
table {%
0 53.3328533172607
1 7.52303129434586
2 1.87733617424965
3 0.969713598489761
4 0.924802258610725
5 0.871628478169441
6 0.665938287973404
7 0.00759840616956353
8 0.00166446508956142
9 0.00107321434188634
10 1.29083266529051e-05
11 5.55726316520122e-07
12 3.63773594358463e-07
13 3.63773594358463e-07
};
\addlegendentry{0.0}
\addplot [semithick, black, dashed, mark=triangle*, mark size=3, mark options={solid,rotate=90}]
table {%
0 53.5708150863647
1 9.12848258018494
2 2.73343834280968
3 1.86155089735985
4 1.81265047192574
5 1.75044342875481
6 1.50706508755684
7 1.50678873062134
8 1.45634807646275
9 1.04243601858616
10 0.753917559981346
11 0.75364588201046
12 0.752063199877739
13 0.752062648534775
14 0.752056166529655
15 0.752055391669273
16 0.752055391669273
};
\addlegendentry{5.0}
\addplot [semithick, blue, dashed, mark=+, mark size=3, mark options={solid}]
table {%
0 40.1463875770569
1 10.2820897102356
2 4.90536141395569
3 4.52179956436157
4 4.49557548761368
5 4.44040274620056
6 4.27240079641342
7 3.97211766242981
8 3.97207605838776
9 3.97125858068466
10 3.96132272481918
11 3.9042734503746
12 3.90096569061279
13 3.89880645275116
14 3.89880645275116
};
\addlegendentry{10.0}
\end{axis}

\end{tikzpicture}}
\scalebox{0.6}{
\begin{tikzpicture}

\begin{axis}[
legend cell align={left},
legend style={draw=white!80.0!black},
log basis y={10},
tick align=outside,
tick pos=left,
title={$\alpha=0.4$},
x grid style={white!69.01960784313725!black},
xmin=0, xmax=30,
xtick style={color=black},
y grid style={white!69.01960784313725!black},
ymin=1e-09, ymax=10000,
ymode=log,
ytick style={color=black},
ytick={1e-11,1e-09,1e-07,1e-05,0.001,0.1,10,1000,100000,10000000},
yticklabels={\(\displaystyle {10^{-11}}\),\(\displaystyle {10^{-9}}\),\(\displaystyle {10^{-7}}\),\(\displaystyle {10^{-5}}\),\(\displaystyle {10^{-3}}\),\(\displaystyle {10^{-1}}\),\(\displaystyle {10^{1}}\),\(\displaystyle {10^{3}}\),\(\displaystyle {10^{5}}\),\(\displaystyle {10^{7}}\)}
]
\addplot [semithick, green!50.0!black, dashed, mark=x, mark size=3, mark options={solid}]
table {%
0 115.451650619507
1 55.6815557479858
2 12.1681172847748
3 2.84161776304245
4 0.774292968213558
5 0.57598253339529
6 0.565692208707333
7 0.559265069663525
8 0.527270138263702
9 0.113389445468783
10 0.110731847584248
11 0.0544058671221137
12 0.0154661547858268
13 0.00551108899526298
14 0.000330900664266665
15 3.06219626509119e-05
16 5.98192627876415e-07
17 4.12627102264196e-07
18 4.12627102264196e-07
};
\addlegendentry{0.0}
\addplot [semithick, black, dashed, mark=triangle*, mark size=3, mark options={solid,rotate=90}]
table {%
0 393.440849304199
1 84.7782745361328
2 27.2842655181885
3 5.89879727363586
4 1.44799050688744
5 0.817721888422966
6 0.785089984536171
7 0.783622309565544
8 0.780330285429955
9 0.770978480577469
10 0.701835945248604
11 0.700526863336563
12 0.700445726513863
13 0.700433939695358
14 0.700425520539284
15 0.700425520539284
};
\addlegendentry{5.0}
\addplot [semithick, blue, dashed, mark=+, mark size=3, mark options={solid}]
table {%
0 414.09595489502
1 108.262996673584
2 39.0302383899689
3 10.5616307258606
4 4.02375507354736
5 2.8948495388031
6 2.81223332881927
7 2.80774056911469
8 2.80121368169785
9 2.7783197760582
10 2.45529371500015
11 2.45418035984039
12 2.42654532194138
13 2.41067522764206
14 2.40962153673172
15 2.40808022022247
16 2.40806806087494
17 2.408067882061
};
\addlegendentry{10.0}
\end{axis}

\end{tikzpicture}}~
\scalebox{0.6}{
\begin{tikzpicture}

\begin{axis}[
legend cell align={left},
legend style={draw=white!80.0!black},
log basis y={10},
tick align=outside,
tick pos=left,
title={$\alpha=0.2$},
x grid style={white!69.01960784313725!black},
xmin=0, xmax=30,
xtick style={color=black},
y grid style={white!69.01960784313725!black},
ymin=1e-09, ymax=10000,
ymode=log,
ytick style={color=black},
ytick={1e-11,1e-09,1e-07,1e-05,0.001,0.1,10,1000,100000,10000000},
yticklabels={\(\displaystyle {10^{-11}}\),\(\displaystyle {10^{-9}}\),\(\displaystyle {10^{-7}}\),\(\displaystyle {10^{-5}}\),\(\displaystyle {10^{-3}}\),\(\displaystyle {10^{-1}}\),\(\displaystyle {10^{1}}\),\(\displaystyle {10^{3}}\),\(\displaystyle {10^{5}}\),\(\displaystyle {10^{7}}\)}
]
\addplot [semithick, green!50.0!black, dashed, mark=x, mark size=3, mark options={solid}]
table {%
0 613.23722076416
1 120.114232063293
2 55.5661678314209
3 17.4515013694763
4 5.57527741789818
5 1.34205881506205
6 0.222799475304782
7 0.0193450364749879
8 0.00261068153486121
9 0.00224743891158141
10 0.00223327704588883
11 0.00214667996624485
12 0.00200046303507406
13 0.00155005925626028
14 0.000291554782961612
15 0.000136574173211557
16 8.2236731032026e-05
17 8.23142755734807e-07
18 4.62587259164593e-07
19 4.62587259164593e-07
};
\addlegendentry{0.0}
\addplot [semithick, black, dashed, mark=triangle*, mark size=3, mark options={solid,rotate=90}]
table {%
0 90.5301990509033
1 40.8014607429504
2 18.8967761993408
3 7.7772341966629
4 3.35537177324295
5 1.8030880689621
6 1.40043035149574
7 1.33696220815182
8 1.33271856606007
9 1.33240655064583
10 1.33158564567566
11 1.32926990091801
12 1.32199600338936
13 1.1702833622694
14 1.16858810186386
15 1.13299213349819
16 1.12438536435366
17 1.11624400317669
18 1.11591820418835
19 1.10879699885845
20 1.10864049196243
21 1.10855661332607
22 1.10854998230934
23 1.10854998230934
};
\addlegendentry{5.0}
\addplot [semithick, blue, dashed, mark=+, mark size=3, mark options={solid}]
table {%
0 278.453353881836
1 153.595050811768
2 61.4162473678589
3 25.5393863916397
4 10.7575808167458
5 5.41261875629425
6 3.73472630977631
7 3.34917116165161
8 3.29887133836746
9 3.29619336128235
10 3.29560053348541
11 3.29301160573959
12 3.28753006458282
13 3.23356175422668
14 3.23348748683929
15 3.22321617603302
16 3.22196024656296
17 3.21674746274948
18 3.21599471569061
19 3.21566373109818
20 3.21562641859055
21 3.21560418605804
22 3.21558380126953
23 3.21558153629303
24 3.21557992696762
};
\addlegendentry{10.0}
\end{axis}

\end{tikzpicture}}
\caption{Loss functions for different $\alpha$'s and different noise levels. The legend numbers denote noise levels. \revise{Due to noise, the loss will converge to different levels and we obtain larger terminal losses for larger noise levels $\sigma$.}}
\label{fig:g}
\end{figure}

We also show the loss function for the cases shown above in \Cref{fig:g}. We can see that for given $\alpha$, for larger noise levels, the optimizer terminates at larger losses. For a fixed noise level, we obtain faster convergence for larger $\alpha$'s. These are consistent with our discussion above.

\subsection{Space-fractional Advection-Diffusion Equation}

\begin{table}[htpb]
\centering
\begin{tabular}{@{}llll@{}}
\toprule
Governing Equation & $\sigma=0$ & $\sigma=5$ & $\sigma=10$ \\ \midrule
$\frac{\partial m}{\partial t} = -10(-\Delta)^{0.2} m$ & \makecell{$a/a^*\ =1.0000$ \\   $\quad s\quad =\mathbf{0.2000}$} & \makecell{$a/a^*\ =1.0378$ \\   $\quad s\quad =\mathbf{0.2069}$} & \makecell{$a/a^*\ =1.0948$ \\   $\quad s\quad =\mathbf{0.2159}$} \\  \hline
$\frac{\partial m}{\partial t} = -10(-\Delta)^{0.4} m$ & \makecell{$a/a^*\ =1.0000$ \\   $\quad s\quad =\mathbf{0.4000}$} & \makecell{$a/a^*\ =0.9834$ \\   $\quad s\quad =\mathbf{0.3983}$} & \makecell{$a/a^*\ =0.9900$ \\   $\quad s\quad =\mathbf{0.3946}$}\\  \hline
$\frac{\partial m}{\partial t} = -10(-\Delta)^{0.6} m$ & \makecell{$a/a^*\ =1.0000$ \\   $\quad s\quad =\mathbf{0.6000}$} & \makecell{$a/a^*\ =1.0285$ \\   $\quad s\quad =\mathbf{0.6021}$} & \makecell{$a/a^*\ =0.9657$ \\   $\quad s\quad =\mathbf{0.5807}$}\\  \hline
$\frac{\partial m}{\partial t} = -10(-\Delta)^{0.8} m$ & \makecell{$a/a^*\ =1.0000$ \\   $\quad s\quad =\mathbf{0.8000}$} & \makecell{$a/a^*\ =1.0365$ \\   $\quad s\quad =\mathbf{0.8093}$} & \makecell{$a/a^*\ =0.9649$ \\   $\quad s\quad =\mathbf{0.7675}$}\\  \bottomrule
\end{tabular}
\caption{Result for space-fractional advection diffusion equation. The first column shows the governing equations we use to generate synthetic observation data. $a^*$ is the exact value. $\sigma$ is the standard deviation in the Gaussian noise added to $m$, $s$ is the estimated space fractional index.}
\label{tab:st-ad}
\end{table}

Finally we show the result for the space-fractional advection diffusion equation in \Cref{tab:st-ad}. We can see that for $\sigma=0$, $5$, $10$, the method recovers both the fractional indices and the diffusion coefficients with good accuracy.

\section{Conclusion}\label{sect:conc}

We developed a new algorithm to learn the hidden dynamics from indirect observations by solving a PDE constrained optimization problem. The algorithm uses a gradient-based optimization method. This is achieved through a flexible gradient back-propagation scheme. The computation tool, \texttt{ADCME}, which materializes this framework, provides the user with a friendly syntax for expressing scientific computing algorithms. It has a powerful and flexible interface. It was designed from the ground up such that custom built differentiable operators can be integrated in a modular fashion. It can be found at
\begin{center}
    \url{https://github.com/kailaix/ADCME.jl}
\end{center}
The \texttt{TensorFlow} backend provides automatic differentiation as an alternative way to deriving and implementing adjoint methods.

The numerical results showed the effectiveness of the framework and the software. We coupled different hidden dynamics (fractional and advection-diffusion) with an acoustic wave equation. We found that the algorithms could learn the hidden dynamics accurately in most cases on the test problems. In some cases, such as when we had very small time-fractional indices and large noise, the diffusion coefficients were estimated less accurately. This was to be expected because of the ill-conditioning of the mathematical problem.

We showed proofs of effectiveness on a time-lapse monitoring problem in seismic imaging. However, the framework is not limited to the full-waveform inversion of slow-time-scale processes. This framework can solve many other PDE constrained optimization problems by decoupling the forward simulation into individual operations. In the future, we plan to benchmark it on geophysics problems where the hidden dynamics are usually assumed to be more specific and complicated models such as the two-phase fluid flow models or the black oil model.
 

\section*{Acknowledgements}
We thank Prof.~Hamdi Tchelepi for inspiring discussion on automatic differentiation and Stephan Hoyer (Google) for help with \texttt{TensorFlow}. We thank Prof.~Tapan Mukerji for helpful discussions on time-lapse seismic inversion and reservoir characterizations. Kailai Xu also thanks the Stanford Graduate Fellowship in Science \& Engineering and the 2018 Schlumberger Innovation Fellowship for their financial support.

\newpage

\appendix
\section{Sample Codes for Wave Equations}\label{sect:code}

We show in \Cref{fig:code} how to solve wave equation with \texttt{ADCME.jl}. The implementation is based on \cite{grote2010efficient}.
\begin{figure}[hbtp]
\centering
  \includegraphics[width=1.0\textwidth]{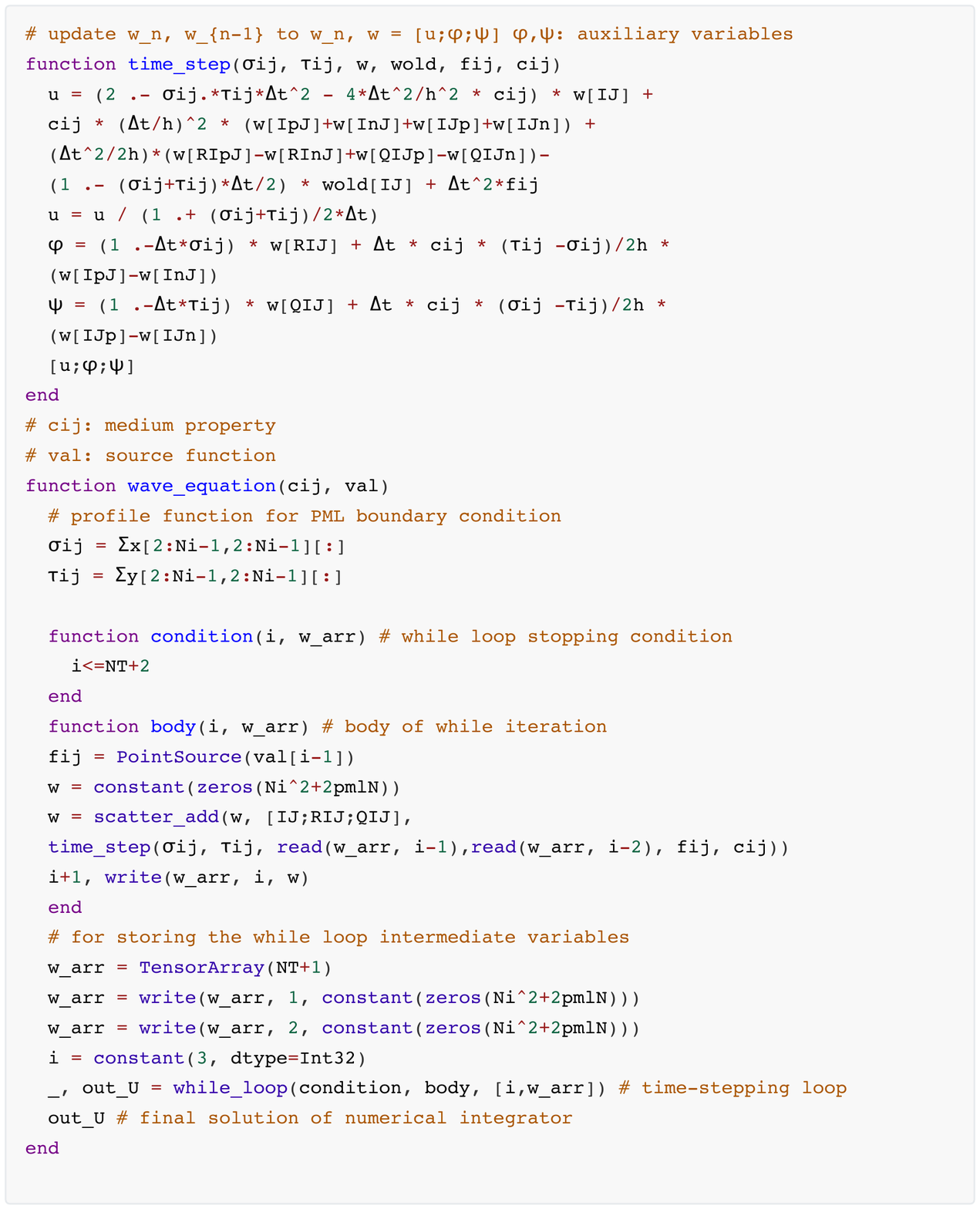}
  \caption{Sample codes for wave equations. The contexts and some global variables are omitted for conciseness.}
  \label{fig:code}
\end{figure}

\section{Proof of \Cref{thm:main}}\label{sect:proof}

\begin{proof}
For clarity we introduce several notation
\begin{align}
	{Q_i} :=& \left\| {\frac{{D{{\bu}^i}}}{{D\bt}}} \right\|\\
	{\rho _{\max }} :=& \mathop {\max }\limits_{0 \leqslant i \leqslant n} \rho (A_i^{ - 1})\\
	\Theta  :=& \mathop {\max }\limits_{0 \leqslant i \leqslant n} \left\| {\frac{{\partial {{\bu}^i}}}{{\partial \bt}}} \right\|
\end{align}
We have
\begin{equation}\label{equ:proof1}
  \frac{{D{{\bu}^i}}}{{D\bt}} = \frac{{\partial {{\bu}^i}}}{{\partial \bt}} + \frac{{\partial {{\bu}^i}}}{{\partial {{\bu}^{i - 1}}}}\frac{{D{{\bu}^{i - 1}}}}{{D\bt}} + \frac{{\partial {{\bu}^i}}}{{\partial {{\bu}^{i - 2}}}}\frac{{D{{\bu}^{i - 2}}}}{{D\bt}} +  \cdots  + \frac{{\partial {{\bu}^i}}}{{\partial {{\bu}^0}}}\frac{{D{{\bu}^0}}}{{D\bt}}
\end{equation}
By \Cref{equ:Fiex} we have
$${{\bu}^i} = A_i^{ - 1}\left( {\sum\limits_{j = 0}^{i - 1} {{a_{ij}}{{\bu}^j}} } \right)$$
and therefore
$$\frac{{\partial {{\bu}^i}}}{{\partial {{\bu}^j}}} = {a_{ij}}A_i^{ - 1}$$
Plug it into \Cref{equ:proof1} and notice that $\rho(A_i^{-1})\leq \rho_{\max}$ we have
$${Q_i} = \left\| {\frac{{D{{\bu}^i}}}{{D\bt}}} \right\| \leqslant \Theta  + {\rho _{\max }}\sum\limits_{j = 0}^{i - 1} {{a_{ij}}{Q_j}} $$
Let $\tilde Q_i$ be auxiliary variables which satisfies
\begin{align}
	{{\tilde Q}_0} =& {Q_0}\\
	{{\tilde Q}_i} =& \Theta  + {\rho _{\max }}\sum\limits_{j = 0}^{i - 1} {{a_{ij}}{{\tilde Q}_j}} 
\end{align}
We now prove that $\tilde Q_j\geq Q_j$, $\forall j$ by mathematical induction.
\begin{itemize}
	\item The claim is true for $j=0$.
	\item Assume it is true for all $j\leq i-1$, for $j=i$, we have
	$${Q_i} \leqslant \Theta  + {\rho _{\max }}\sum\limits_{j = 0}^{i - 1} {{a_{ij}}{Q_j}}  \leqslant \Theta  + {\rho _{\max }}\sum\limits_{j = 0}^{i - 1} {{a_{ij}}{{\tilde Q}_j}}  \leqslant {{\tilde Q}_i}$$
	hence it is also true for $j=i$.
	\item As a result, $\tilde Q_j\geq Q_j$, $\forall j$.
\end{itemize}

Now we give a bound on $\tilde Q_j$. We scale $\tilde Q_j$ by a factor of $\frac{\Theta}{1-\rho_{\max}}$, i.e.,
$$Q{'_i} = \frac{{{{\tilde Q}_i}\left( {1 - {\rho _{\max }}} \right)}}{\Theta }$$
then we have
$$Q{'_i} = 1 - {\rho _{\max }} + {\rho _{\max }}\sum\limits_{j = 0}^{i - 1} {{a_{ij}}Q{'_j}} $$
We prove that if $Q'_0\leq C$, where $C>0$ is a constant, then $Q'_i\leq C$, $\forall i$. In fact, thanks to $C>1$ and the first assumption in the theorem, if $Q'_j<C$ for $j<i$, then we have
$$Q{'_i} = 1 - {\rho _{\max }} + {\rho _{\max }}\sum\limits_{j = 0}^{i - 1} {{a_{ij}}Q{'_j}}  \leq  1 - {\rho _{\max }} + {\rho _{\max }}C \leq  C$$
therefore we have obtained
$${Q_i} \leqslant \tilde Q{'_i} = \frac{{Q{'_i}\Theta }}{{1 - {\rho _{\max }}}} \leq  \frac{{C\Theta }}{{1 - {\rho _{\max }}}}$$
Finally, we prove that $\frac{\Theta }{{1 - {\rho _{\max }}}}$ can be upper bounded by a constant independent of $\Delta t$. In fact, thanks to the last three assumptions we have
$$C_0' \leqslant \frac{\Theta }{{1 - {\rho _{\max }}}} \leqslant \frac{{{C_2}{{(\Delta t)}^\alpha }}}{{{C_1}{{(\Delta t)}^\alpha }}} = \frac{{{C_2}}}{{{C_1}}}$$
in this case, $C$ is selected as $Q_0C_0'$ and we have
\begin{equation}
	Q_i \leq \frac{Q_0C_0'C_2}{C_1}:=C_0
\end{equation}
where the constant $C_0$ is independent of $i$ and $\Delta t$. 

As a consequence, 
\[
	\left\| {\frac{{DJ}}{{D\bt}}} \right\| = \left\| {\frac{{\partial J}}{{\partial {{\bu}^n}}}\frac{{D{{\bu}^n}}}{{D\bt}}} \right\| \leqslant \left\| {\frac{{\partial J}}{{\partial {{\bu}^n}}}} \right\|\left\| {\frac{{D{{\bu}^n}}}{{D\bt}}} \right\| = {C_0}\left\| {\frac{{\partial J}}{{\partial {{\bu}^n}}}} \right\|
\]
\end{proof}

To make the analysis simple, in the following corollaries, we assume the transport coefficients $b_1=b_2=0$. As we can see, the stability depends on how we discretize the system.

\begin{corollary}[Stability for the fractional advection diffusion equation]\label{coro:1}
	 The scheme proposed in \Cref{equ:tscheme} is stable with respect to $a$.
\end{corollary}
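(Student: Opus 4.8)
The plan is to verify the four hypotheses of \Cref{thm:main} with $\bt=a$ and with $\alpha$ the time‑fractional index, and then quote the theorem. Under the standing assumption $b_1=b_2=0$, the operator $A$ in \Cref{equ:tscheme} is the discretization $-a\Delta_h$ of $-a\Delta$ with homogeneous Dirichlet data: it is symmetric positive definite, commutes with the discrete Laplacian $\Delta_h$, and both its smallest eigenvalue $c:=a\,\lambda_{\min}(-\Delta_h)>0$ and its norm $\|\Delta_h\|$ are constants independent of $\Delta\tau$. Writing \Cref{equ:tscheme} in the multi‑step form \Cref{equ:Fiex} I read off $A_i=I+\Gamma(2-\alpha)\Delta\tau^\alpha A$ (the same matrix at every step) and the coefficients $a_{ij}$ assembled from the weights $G_m=(m+1)^{1-\alpha}-m^{1-\alpha}$, namely $a_{i,k}=G_{i-k-1}-G_{i-k}$ for $1\le k\le i-1$ and $a_{i,0}=G_{i-1}$.

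Hypotheses 1 and 2 are then short computations. Since $t\mapsto t^{1-\alpha}$ is increasing and concave on $[0,\infty)$, the sequence $G_m$ is nonnegative and nonincreasing, so all $a_{ij}\ge 0$; and telescoping, $\sum_{k=1}^{i-1}(G_{i-k-1}-G_{i-k})=G_0-G_{i-1}$, together with $G_0=1$, gives $\sum_{j=0}^{i-1}a_{ij}=1$ — the boundary case of hypothesis 1, which is all the proof of \Cref{thm:main} actually uses. For hypothesis 2, $A_i$ is symmetric with eigenvalues $1+\Gamma(2-\alpha)\Delta\tau^\alpha\mu\ge 1+\Gamma(2-\alpha)c\,\Delta\tau^\alpha$, so $\rho(A_i^{-1})=(1+\Gamma(2-\alpha)c\,\Delta\tau^\alpha)^{-1}\le 1-C_1\Delta\tau^\alpha$ for $\Delta\tau\le\Delta\tau_0$, with $C_1=\Gamma(2-\alpha)c/(1+\Gamma(2-\alpha)c\,\Delta\tau_0^\alpha)>0$.

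For hypothesis 3 I would differentiate $A_i\bu^i=\sum_{j<i}a_{ij}\bu^j$ with respect to $a$, holding $\bu^0,\dots,\bu^{i-1}$ fixed (the weights $a_{ij}$ do not involve $a$), obtaining $\partial\bu^i/\partial a=\Gamma(2-\alpha)\Delta\tau^\alpha A_i^{-1}\Delta_h\bu^i$. Since $\|A_i^{-1}\|\le 1$, it suffices to bound $\|\bu^i\|$ uniformly; but $\bu^i=A_i^{-1}\sum_{j<i}a_{ij}\bu^j$ with $\|A_i^{-1}\|\le 1$ and $\sum_j a_{ij}\le 1$, so an immediate induction yields $\|\bu^i\|\le\|\bu^0\|$ for all $i$, hence $\|\partial\bu^i/\partial a\|\le\Gamma(2-\alpha)\|\Delta_h\|\,\|\bu^0\|\,\Delta\tau^\alpha=:C_2\Delta\tau^\alpha$. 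For hypothesis 4, since all $A_i$ coincide, $\rho_{\max}=\rho(A_1^{-1})$ and $1-\rho_{\max}\le\Gamma(2-\alpha)c\,\Delta\tau^\alpha$; for the numerator I would localize to the first step, where $\bu^1=A_1^{-1}\bu^0$ (because $a_{1,0}=G_0=1$), so, using that $A_1$ commutes with $\Delta_h$, $\partial\bu^1/\partial a=\Gamma(2-\alpha)\Delta\tau^\alpha\,\Delta_h A_1^{-2}\bu^0$. Expanding $\bu^0$ in an orthonormal eigenbasis of $\Delta_h$ and using that every eigenvalue of the Dirichlet discrete Laplacian is nonzero, $\|\Delta_h A_1^{-2}\bu^0\|\ge\delta>0$ uniformly for $\Delta\tau\le\Delta\tau_0$ whenever $\bu^0\ne 0$; thus $\Theta\ge\|\partial\bu^1/\partial a\|\ge\Gamma(2-\alpha)\delta\,\Delta\tau^\alpha$ and $\Theta/(1-\rho_{\max})\ge\delta/c=:C_0'>0$. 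All four hypotheses then hold with the common exponent $\alpha\in(0,1]$ ($\alpha=1$ recovering \Cref{equ:scheme0}), so \Cref{thm:main} gives the claim.

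The main obstacle I anticipate is hypothesis 4, the learnability bound: one needs a lower bound on $\Theta$ with exactly the rate $\Delta\tau^\alpha$ at which $1-\rho_{\max}$ decays, which forces one to track a single time level carefully and to invoke the nonsingularity of the Dirichlet discrete Laplacian together with $\bu^0\ne 0$. The only other places requiring a little care are the telescoping identity for the $G_m$ weights in hypothesis 1 and the uniform energy estimate $\|\bu^i\|\le\|\bu^0\|$ that makes hypothesis 3 go through.
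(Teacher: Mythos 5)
Your proposal takes the same overall route as the paper: verify the four hypotheses of \Cref{thm:main} with $\bt=a$ and invoke the theorem. The differences are in how the hypotheses are checked, and at each point your version is the more careful one. For hypothesis 1, the paper reads the weights off \Cref{equ:tscheme} literally and takes $a_{n,0}=-G_n$, obtaining the strict bound $\sum_j a_{ij}<1$ but at the price of a negative coefficient (so the nonnegativity in the hypothesis is not literally met); you instead use the corrected L1 weights with $a_{i,0}=G_{i-1}\ge 0$, get $\sum_j a_{ij}=G_0=1$ exactly, and justify the boundary case by noting that the proof of \Cref{thm:main} only uses $\sum_j a_{ij}\le 1$ together with $C\ge 1$ --- which is correct, and arguably cleaner than the paper's handling. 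For hypothesis 3, the paper bounds $\bigl\|\partial\mathbf{m}^{n+1}/\partial a\bigr\|$ through an explicit componentwise computation and cites the literature for the uniform bound on $\|\mathbf{m}^{n+1}\|$; you prove that bound yourself via the self-contained estimate $\|\bu^i\|\le\|\bu^0\|$, using $\|A_i^{-1}\|\le 1$, $a_{ij}\ge 0$ and $\sum_j a_{ij}\le 1$, which is equally valid and keeps the argument internal. For hypothesis 4, the paper asserts $\|\partial\mathbf{m}^{n+1}/\partial a\|=\Delta t^\alpha m_a^{n+1}$ with $m_a^{n+1}$ independent of $\Delta t$ without much justification and then uses $\lambda_{\min}$ of the discrete Laplacian; you localize to the first step, exploit commutativity of $A_1$ with the discrete Laplacian and a spectral expansion to obtain a genuine lower bound $\Theta\ge\Gamma(2-\alpha)\delta\,\Delta\tau^\alpha$, at the cost of the explicit nondegeneracy assumption $\bu^0\ne 0$ --- an assumption the paper's argument also needs implicitly (its $\max_n m_a^{n+1}$ must be positive). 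In short: same skeleton as the paper, but with a corrected reading of the scheme's weights, a self-contained stability estimate in place of a citation, and a more rigorous learnability bound; I see no genuine gap.
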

\begin{proof}
	In this case, we have
\begin{equation}
  \begin{cases}
	a_{n,k} = G_{n-k-1}-G_{n-k} & n-1\geq k\geq 1\\    
	a_{n,0} = -G_n & k = 0
\end{cases}
\end{equation}
Therefore
\begin{equation}
	\sum_{j=0}^{i-1}a_{ij} = 1+(i-1)^{1-\alpha}-(i+1)^{1-\alpha} < 1
\end{equation}
the first assumption is satisfied.

Note $A_i = 1+\frac{\Delta \tau^\alpha}{\Gamma(2-\alpha)}A$, where $A$ is the discretized advection diffusion operator and therefore
\begin{equation}
	\rho(A_i^{-1}) = \rho\left( \left(I+\frac{\Delta \tau^\alpha}{\Gamma(2-\alpha)}A\right)^{-1}\right) \leq  1- C(\Delta t)^\alpha
\end{equation}
for some $C>0$. Hence the second assumption is satisfied. 

For the third assumption, consider the diffusion coefficient $a$, given $\bv\in \RR^m$, and $\tilde\bv=(I+\Gamma(2-\alpha)\Delta \tau^\alpha A)^{-1}\bv$, we have
\begin{equation}
	\tilde\bv^T\frac{\partial \mathbf{m}^{n+1}}{\partial a} =\Gamma(2-\alpha) \Delta \tau^\alpha \sum_{i, j} \tilde v_{ij}\frac{{m_{i - 1,j}^{n+1} + m_{i + 1,j}^{n+1} + m_{i,j - 1}^{n+1} + m_{i,j + 1}^{n+1} - 4m_{i,j}^{n+1}}}{{{h^2}}}
\end{equation}
and therefore
\begin{equation}
	\left\|\tilde\bv^T\frac{\partial \mathbf{m}^{n+1}}{\partial a}\right\|\leq \Delta \tau^\alpha\frac{8\Gamma(2-\alpha) }{h^2} \|\tilde \bv\| \|\mathbf{m}^{n+1}\|
\end{equation}
The upper bound of $\|\mathbf{m}^{n+1}\|$ independent of $\Delta t$ is from standard estimation for the forward problem, see, for example, \cite{jiang2015new}. Therefore, we have
\begin{equation}
	\left\|\frac{\partial \mathbf{m}^{n+1}}{\partial a}\right\| \leq C\Delta t^\alpha
\end{equation}
for a constant $C$ independent of $\Delta t$. Hence the third condition is satisfied. Finally, note that 
\begin{equation}
	\left\|\frac{\partial \mathbf{m}^{n+1}}{\partial a}\right\|  = \Delta t^\alpha m_a^{n+1}
\end{equation}
for a $m_a^{n+1}$ independent of $\Delta t$, thus
\begin{equation}
	\max_{n}\left\|\frac{\partial \mathbf{m}^{n+1}}{\partial a}\right\|  =\Delta t^\alpha \max_{n} m_a^{n+1}
\end{equation}
In addition, $A$ is the discrete Laplacian operator, and therefore $A$ is symmetric. Assume the smallest eigenvalue of $A$ is $\lambda_{\min}>0$, then we have
$$\rho (A_i^{ - 1}) = \rho \left( {{{\left( {I + \frac{{\Delta {t ^\alpha }}}{{\Gamma (2 - \alpha )}}A} \right)}^{ - 1}}} \right) = \frac{1}{{1 + \frac{{\Delta {t ^\alpha }}}{{\Gamma (2 - \alpha )}}{\lambda _{\min }}}}$$
from which we have
\begin{equation}
	\dfrac{\mathop {\max }\limits_{0 \leqslant i \leqslant n} \left\| {\frac{{\partial {{\bu}^i}}}{{\partial \bt}}} \right\|}{1-\mathop {\max }\limits_{0 \leqslant i \leqslant n} \rho (A_i^{ - 1})} = \frac{{\Delta {t ^\alpha }\mathop {\max }\limits_n m_a^{n + 1}}}{{1 - \frac{1}{{1 + \frac{{\Delta {t ^\alpha }}}{{\Gamma (2 - \alpha )}}{\lambda _{\min }}}}}} \geqslant \Gamma (2 - \alpha )\frac{{\mathop {\max }\limits_n m_a^{n + 1}}}{{{\lambda _{\min }}}}
\end{equation}
therefore the last assumption is also satisfied. 

As a result, it is stable for the diffusion coefficient $a$.
\end{proof}

\newpage

\bibliographystyle{siamplain}
\bibliography{main}
\end{document}